\definecolor{MyDarkGreen}{rgb}{0,0.45,0}
\definecolor{MyDarkRed}{rgb}{0.9,0,0}
\newtheorem{theorem}{Theorem}[section]
\newtheorem{lemma}[theorem]{Lemma}
\newtheorem{corollary}[theorem]{Corollary}
\title{Monotone convergence of spreading processes on networks}
\author{Gadi Fibich, Amit Golan, and Steven Schochet}
\date{\today}
\let\inserttitle\@title
\begin{document}

	\maketitle
	\begin{abstract}
		We analyze the Bass and SI~models for the spreading of innovations and epidemics, respectively, on homogeneous complete networks, circular networks, and heterogeneous complete networks with two homogeneous groups. We allow the network parameters to be time dependent, which is a prerequisite for the analysis of optimal strategies on networks.  Using a novel top-down analysis of the master equations, we present a simple proof for the monotone convergence of these models to their respective infinite-population limits. This leads to explicit expressions for the expected adoption or infection level in the Bass and SI~models, respectively, on infinite homogeneous complete and circular networks, and on heterogeneous complete networks with two homogeneous groups with time-dependent parameters. 
			\end{abstract}

	\section{Introduction}
	
	%   Highlights:
	%   \begin{enumerate}
		%   	\item First derivation of infinite-population limit of SI~model on complete and circular networks.
		%   	\item First derivation of infinite-population limit of Bass and SI~models on networks with time-varying weights. This is important for the analysis of optimal promotional strategies on networks. 
		%   	\item Unified treatment  of the Bass and SI~models on networks. 
		%   	   	 \item A new top-down approach for analyzing the masters equations
		%%   	 \item First derivation of the condition that determines whether the correlation  between the nonadoption/noninfection probabilities of two sets of nodes is positive or zero. 
		%%   	 \item First derivation of an upper bound for  the correlation  between the nonadoption/noninfection probabilities of two sets of nodes. 
		%   \end{enumerate}
	
	Spreading processes on networks have attracted the attention of researchers  
	in physics, mathematics, biology, computer science, social sciences, economics, and management science,
	as it concerns the spreading of ``items'' ranging from diseases and computer viruses to rumors, information, opinions, technologies, and 
	innovations~\cite{Albert-00,Anderson-92,Jackson-08,Pastor-Satorras-01,Strang-98}. In this study, we focus on two prominent network models: The Bass model for the adoption of innovations~\cite{Bass-69}, and the Susceptible-Infected (SI) model for the spread of epidemics~\cite{Epidemics-on-Network-17}. 
%	In general, these models have two types of applications. The first is prediction:  
%	One application of these models is to predict the adoption or infection level as a function of time. 
   These models were originally formulated as compartmental models, in which the 
   population is divided into two compartments: adopters/infected and nonadopters/susceptibles. In recent years, research has shifted to studying these models on networks.

	In the Bass and SI models on networks, the adoption/infection event by each node is stochastic. Since a direct analysis of stochastic particle models is hard, 	  
	there has been a considerable research effort to derive a deterministic ODE 
	for the macroscopic behavior of the expected adoption/infection level as a function of time.
	%in the infinite-population limit, 
	Niu~\cite{Niu-02} {\em derived} the ODE for the infinite-population limit of the Bass model on homogeneous complete networks. The approach in~\cite{Niu-02}, however, does not extend to other types of networks. Fibich and Gibori obtained an explicit expression for the expected adoption level in the Bass model on infinite circles~\cite{OR-10}. They did not prove rigorously, however, that this expression is the limit of the Bass model on circles with $M$~nodes as $M\to\infty$. In~\cite{DCDS-23}, Fibich et al.\ rigorously derived the 
	infinite-population limit of the Bass model on homogeneous complete networks,
	on heterogeneous complete networks with $K$~groups, and on circular networks, and also the rate of convergence for these three cases. We are not aware of similar convergence results for the SI model on networks.

	   The common theme of the above studies has been to predict the expected adoption/infection level as a function of time, and to analyze how it is affected by the network structure and parameters. We note, however, 
	   that another important application of the (compartmental) Bass and SI models has been to compute optimal strategies that influence the spreading process
	   im a desired fashion. For example, one can use the Bass model to compute optimal promotional campaigns that maximize the profits~\cite{optimal-promotion}. Similarly, the SI~model can be used to compute optimal government restrictions that minimize disease spread while keeping the economy healthy~\cite{Balderrama2022-qa}.  So far,
	   these optimal-control problems have only been studied in the context of compartmental Bass and SI models, which implicitly assume that the social network is a complete homogeneous network.
 
 In order to apply the machinery of optimal-control theory to spreading on networks, one first needs to derive a single ODE, or a small system of ODEs, for the macroscopic dynamics. Moreover, if one want to allow for time-dependent optimal strategies, the network parameters should be allowed to be time-dependent. 
	In this paper, we present the first rigorous derivation of a single ODE
	for the expected adoption/infection level in the Bass and SI~models on 
	networks with time-varying parameters. These ODEs, in turn, are 
	used in a companion study~\cite{optimal-promotion} to compute and analyze optimal promotional strategies in the Bass model on networks.

To the best of our knowledge, this paper presents 
 first derivation of infinite-population limit of the Bass model on networks with time-varying parameters, and also the first derivation of infinite-population limit of the SI~model on networks, with and without time-varying parameters.
From a methodological point of view, this paper presents the first unified treatment of the Bass and SI~models on networks. In addition, it introduces a novel ``top-down'' analysis of the (bottom-up) master equations,
where one proves the desired property (e.g., monotonicity) at the ``top'' level of the master equation for the whole population, and then proves that the property remains valid as the number of nodes is reduced one at a time, until reaching the desired ``bottom''  level of the master equation for a single node.  Finally, proving the convergence of the model as the population becomes infinite using the monotonicity property is much simpler than the approach used in~\cite{DCDS-23}. Since this monotonicity-based proof also applies to networks with  time-varying parameters, it is potentially 
applicable to a wider variety of networks. In particular, this is a prerequisite  for optimal control applications on networks.

	The paper is organized as follows: In Section~\ref{sec:bass/si} we introduce the unified Bass/SI~model on networks.  In section~\ref{sec:complete}, we analyze the Bass/SI~model on complete homogeneous networks with time-varying parameters. We first show that the expected adoption/infection level is monotonically increases in~$M$.
	We then show that the expected adoption/infection level converges monotonically as $M \to \infty$, and 
	compute this limit  explicitly. In sections~\ref{sec:1D_1_sided} and~\ref{sec:groups}, we obtain similar results for the Bass/SI~model on homogeneous circular networks and on heterogeneous complete networks with two homogeneous groups, respectively.
	
	Finally, we note that the methodology and results of this study have the potential to be extended to other types of networks (Cartesian, random, \dots ), to hypernetworks, and to other types of spreading models (SIS, SIR, Bass-SIR, etc.)~\cite{Epidemics-on-Network-17,Bass-SIR-model-16, Bass-SIR-SIAP}.

	\section{Bass/SI~model on networks}
	\label{sec:bass/si}
	
	The Bass model describes the adoption of new products or innovations within a population. In this framework, all individuals start as non-adopters and can transition to becoming adopters due to two types of influences: external factors, such as exposure to mass media, and internal factors where individuals are influenced by their peers who have already adopted the product.
	The SI~model is used to study the spreading of infectious diseases within a population. In this model, some individuals are initially infected
        (the ``patient zero'' cases), all subsequent infections occur through internal influences, whereby infected individuals transmit the disease to their susceptible peers, and infected individuals remain contagious indefinitely.
In both models, once an individual becomes an adopter/infected, it remains so at all later times.  In particular, 
she or he remain ``contagious'' forever.  
	The difference between the SI~model and the Bass model is the lack of external influences in the former, and the lack of initial adopters in the latter. 
	
	It is convenient to unify these two models into a single model, 
	the Bass/SI~model on networks, as follows. %A new product is introduced at time $t=0$ to
	Consider $M$~individuals, denoted by ${\cal M}:=\{1, \dots, M\}$. 
	We denote by $X_j(t)$ the state of individual~$j$ at time~$t$, so that 
	\begin{equation*}
		X_j(t)=\begin{cases}
			1, \qquad {\rm if}\ j\ {\rm is \ adopter/infected \ at\ time}\ t,\\
			0, \qquad {\rm otherwise,}
		\end{cases}
		\qquad j \in \cal M.
	\end{equation*} 
	The initial conditions at $t=0$ are stochastic, so that 
	\begin{subequations}
		\label{eqs:Bass-SI-models-ME}
		\begin{equation}
			\label{eq:general_initial}
			X_j(0)=	X_j^0 \in \{0,1\}, \qquad j\in {\cal M},
		\end{equation}
		where
		\begin{equation}
			\mathbb{P}(X_j^0=1) =I_j^0, \quad 
			\mathbb{P}(X_j^0=0) =1-I_j^0,\quad I_j^0 \in [0, 1],  \qquad 
			j \in \cal M,
		\end{equation}
		and 
		\begin{equation}
			\label{eq:p:initial_cond_uncor-two_sided_line}
			\mbox{the random variables $\{X_j^0 \}_{j \in \cal M}$ are independent}.
		\end{equation} 
		{\em Deterministic initial conditions} are a special case where
		$I_j^0 \in \{0,1\}$. 
		
		So long that $j$ is a nonadopter/susceptible, its adoption/infection rate at time~$t$ is
		\begin{equation}
			\label{eq:lambda_j(t)-Bass-model-heterogeneous-tools}
			\lambda_j(t) = p_j(t)+\sum\limits_{k\in {\cal M}} q_{k,j}(t) X_{k}(t),
			\qquad j \in {\cal M}.
		\end{equation}
		Here, $p_j(t)$ is the rate of external influences on~$j$, and~$q_{k,j}(t)$ is the rate of 
	internal influences by~$k$ on~$j$ at time $t$, provided that $k$ is already an adopter/infected. Once~$j$ becomes an adopter/infected, it remains so at all later times.\,\footnote{i.e., the only admissible transition is 
		$X_j=0 \to X_j=1$.}
	Hence, as $ \Delta t \to 0$,
		\begin{equation}
			\label{eq:general_model}
			\mathbb{P} (X_j(t+\Delta  t )=1  \mid   {\bf X}(t))=
			\begin{cases}
				\lambda_j(t) \, \Delta t , &  {\rm if}\ X_j(t)=0,
				\\
				1,\hfill & {\rm if}\ X_j(t)=1,
			\end{cases}
			\qquad 	j \in {\cal M},
		\end{equation}
		where ${\bf X}(t) := \{X_j(t)\}_{j \in \cal M}$ 
		is the state of the network at time~$t$,
		and
		\begin{equation}
			\label{eq:Bass-SI-models-ME-independent}
			\mbox{the random variables $\{X_j(t+\Delta  t )  \mid   {\bf X}(t) \}_{j \in \cal M}$ are independent}.
		\end{equation}  
		%
		%
		% the conditional random variables $\{X_j(t+\Delta  t )=1  \mid   {\bf X}(t) \}_{j \in \cal M}$ are independent.
	\end{subequations}
In the Bass model there are no adopters when the product is first introduced into the market, and so $I_j^0 \equiv 0$.
In the SI~model there are only internal influences for $t>0$, and so $p_j(t) \equiv 0$.

	The quantity of most interest is the expected  adoption (infection) level
	\begin{equation}
		\label{eq:number_to_fraction-general}
		f(t):= 
		%	\frac{1}{M} \mathbb{E}[N(t)]
		%	=
		%	\frac{1}{M} \sum_{j=1}^{M}\mathbb{E}[X_j(t)] =
		\frac{1}{M} \sum_{j=1}^{M} f_j(t),
	\end{equation}
	where $f_j :=\mathbb{E}[X_j]$ in the adoption/infection probability of node~$j$.

	\subsection{Master equations}
	
	The key tool in the analysis of the  Bass/{\rm SI} model~\eqref{eqs:Bass-SI-models-ME}
	are the master equations. 
	Let $ \emptyset \not= \Omega \subset  {\cal M}$ be a nontrivial subset of the nodes,  let
	$
	\Omega^{\rm c}:={\cal M} \setminus \Omega,
	$  and let 
	\begin{equation}
		\label{eq:def-S_Omega-ME}
		S_{\Omega}(t):=\{X_{m}(t)=0,~ \forall m \in \Omega \},
		\qquad  [S_{\Omega}](t)
		:= \mathbb{P}(S_{\Omega}(t)),
	\end{equation}
	denote the event that all nodes in~$\Omega$ are nonadopters at time~$t$, and
	the probability of this event, respectively.
	To simplify the notations, we introduce the notation
	\begin{equation*}
		S_{\Omega_1,\Omega_2}:=S_{\Omega_1 \cup \Omega_2}, \qquad
		\Omega_1,\Omega_2 \subset \cal M.
	\end{equation*}
	Thus, for example, $	S_{\Omega,k}:=S_{\Omega \cup \{k\}}$.
	We also denote the sum of the external influences on the nodes in~$\Omega$
	and the sum of the internal influences by node $k$ on the nodes in~$\Omega$
	by 
	$$
	p_\Omega(t):=\sum_{m  \in \Omega} p_{m}(t),
	\qquad 
	q_{k,\Omega}(t):=\sum_{m \in \Omega} q_{k,m}(t),
	$$
	respectively. 
	
	\begin{theorem}[\cite{MOR-22}]
		\label{thm:master-eqs-general}
		The master equations for the Bass/{\rm SI} model~\eqref{eqs:Bass-SI-models-ME} are
		\begin{subequations}
			\label{eqs:master-eqs-general}
			\begin{equation}
				\label{eq:master-eqs-general}
				\frac{d [S_{\Omega}]}{dt}  = 
				%-\bigg(\sum_{m  \in \Omega} p_{m}(t)
				-\bigg( p_\Omega(t)
				+\sum_{k \in \Omega^{\rm c}} q_{k,\Omega}(t)\bigg)[S_{\Omega}] 
				+\sum_{k \in \Omega^{\rm c}}q_{k,\Omega}(t)\,  [S_{\Omega,k}],
			\end{equation}
			subject to the initial conditions
			\begin{equation}
				\label{eq:master-eqs-genera-icl}
				[S_{\Omega}](0)=[S_{\Omega}^0], \qquad [S_{\Omega}^0]:= \prod_{m \in \Omega} (1-I_m^0), 
			\end{equation}
			for all $\emptyset\not=\Omega \subset {\cal M}$.
		\end{subequations}
	\end{theorem}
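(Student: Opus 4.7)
The plan is to derive the master equations by computing $[S_\Omega](t+\Delta t)$ via a one-step conditioning argument and then letting $\Delta t\to 0^+$. The initial conditions fall out immediately from the independence assumption~\eqref{eq:p:initial_cond_uncor-two_sided_line}, since
\begin{equation*}
[S_\Omega](0)=\mathbb{P}\bigl(X_m^0=0\ \forall m\in\Omega\bigr)=\prod_{m\in\Omega}\mathbb{P}(X_m^0=0)=\prod_{m\in\Omega}(1-I_m^0),
\end{equation*}
so the substantive work is the ODE itself.

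First, I would condition on the full state $\mathbf{X}(t)$. The event $S_\Omega(t+\Delta t)$ requires that every $m\in\Omega$ is a nonadopter at time $t$ \emph{and} that no $m\in\Omega$ transitions during $(t,t+\Delta t)$. Using~\eqref{eq:general_model} together with the conditional independence~\eqref{eq:Bass-SI-models-ME-independent}, the product form yields
\begin{equation*}
\mathbb{P}\bigl(S_\Omega(t+\Delta t)\mid\mathbf{X}(t)\bigr)
=\mathbbm{1}_{S_\Omega(t)}\prod_{m\in\Omega}\bigl(1-\lambda_m(t)\,\Delta t\bigr)
=\mathbbm{1}_{S_\Omega(t)}\Bigl(1-\sum_{m\in\Omega}\lambda_m(t)\,\Delta t\Bigr)+O(\Delta t^2).
\end{equation*}
On the event $S_\Omega(t)$ one has $X_m(t)=0$ for every $m\in\Omega$, so the internal-influence terms with $k\in\Omega$ drop out of $\sum_{m\in\Omega}\lambda_m(t)$, leaving
\begin{equation*}
\mathbbm{1}_{S_\Omega(t)}\sum_{m\in\Omega}\lambda_m(t)
=\mathbbm{1}_{S_\Omega(t)}\Bigl(p_\Omega(t)+\sum_{k\in\Omega^{\rm c}}q_{k,\Omega}(t)X_k(t)\Bigr).
\end{equation*}

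Next, I would take the unconditional expectation. The deterministic terms give $p_\Omega(t)[S_\Omega](t)$ directly, and for each $k\in\Omega^{\rm c}$,
\begin{equation*}
\mathbb{E}\bigl[\mathbbm{1}_{S_\Omega(t)}X_k(t)\bigr]
=\mathbb{P}\bigl(S_\Omega(t),\,X_k(t)=1\bigr)
=[S_\Omega](t)-[S_{\Omega,k}](t),
\end{equation*}
using that $X_k(t)\in\{0,1\}$ and decomposing by the value of $X_k$. Substituting, subtracting $[S_\Omega](t)$, dividing by $\Delta t$, and sending $\Delta t\to 0^+$ yields exactly~\eqref{eq:master-eqs-general}.

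The only real subtlety is the bookkeeping that makes the $k\in\Omega$ terms disappear, i.e.\ justifying the restriction of the $k$-sum to $\Omega^{\rm c}$; this uses crucially that we are on the event $S_\Omega(t)$ \emph{before} taking expectations, which is why I would condition on $\mathbf{X}(t)$ first and expand the indicator times the rate, rather than trying to manipulate $[S_\Omega]$ directly. Everything else—the product-to-sum expansion of $\prod_m(1-\lambda_m\Delta t)$, the conditional independence, and the identification of $\mathbb{E}[\mathbbm{1}_{S_\Omega}X_k]$ with $[S_\Omega]-[S_{\Omega,k}]$—is routine and does not require any regularity beyond continuity of $p_j(t),q_{k,j}(t)$.
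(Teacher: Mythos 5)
The paper does not prove this theorem itself; it is quoted from~\cite{MOR-22}, so there is no in-paper argument to compare against. Your derivation is the standard one and is correct: conditioning on $\mathbf{X}(t)$, factoring the survival probability via the conditional independence~\eqref{eq:Bass-SI-models-ME-independent}, using $X_k(t)=0$ for $k\in\Omega$ on the event $S_\Omega(t)$ to restrict the internal-influence sum to $\Omega^{\rm c}$, and identifying $\mathbb{E}\bigl[\mathbbm{1}_{S_\Omega(t)}X_k(t)\bigr]=[S_\Omega]-[S_{\Omega,k}]$ are exactly the steps needed, and the initial condition follows from the independence of the $X_j^0$ as you state.
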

	If we can solve these  $2^M-1$ equations, then we have~$f(t)$ from~\eqref{eq:number_to_fraction-general} and~$f_j = 1-[S_j]$.

	\begin{corollary}
		\label{cor:[S_Omega]<=[S_Omega^0]e^-pt}
		Consider the Bass/{\rm SI} model~\eqref{eqs:Bass-SI-models-ME}.
		Let $\emptyset \not=\Omega \subset \cal M$. 
		Then 
		\begin{equation}
			\label{eq:[S]<e^-pt}
			[S_{\Omega}](t) \le [S_{\Omega}^0] e^{-\int_0^t p_\Omega}, \qquad t \ge 0.
		\end{equation}
	\end{corollary}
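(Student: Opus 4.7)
The plan is to bound the right-hand side of the master equation~\eqref{eq:master-eqs-general} from above and then apply a Gronwall-type argument. The key observation is that the event $S_{\Omega,k}$ is contained in the event $S_\Omega$ (requiring an additional node to be a nonadopter can only shrink the event), so
\[
[S_{\Omega,k}](t) \le [S_{\Omega}](t), \qquad k \in \Omega^{\rm c}, \quad t \ge 0.
\]
Since the coefficients $q_{k,\Omega}(t)$ are nonnegative, substituting this bound into the nonnegative sum on the right of~\eqref{eq:master-eqs-general} causes the two terms containing $\sum_{k \in \Omega^{\rm c}} q_{k,\Omega}(t)$ to cancel.

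This yields the differential inequality
\[
\frac{d[S_\Omega]}{dt} \le -p_\Omega(t)\, [S_\Omega](t), \qquad t \ge 0.
\]
Multiplying by the integrating factor $e^{\int_0^t p_\Omega(s)\,ds}$ shows that $[S_\Omega](t)\, e^{\int_0^t p_\Omega(s)\,ds}$ is nonincreasing in~$t$, and evaluating at $t=0$ using the initial condition~\eqref{eq:master-eqs-genera-icl} gives the claimed bound~\eqref{eq:[S]<e^-pt}.

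There is no real obstacle here; the only point requiring a brief justification is the monotonicity $[S_{\Omega,k}] \le [S_\Omega]$, which is immediate from the set-theoretic inclusion $S_{\Omega\cup\{k\}}(t) \subset S_\Omega(t)$ inherent in the definition~\eqref{eq:def-S_Omega-ME}. Nonnegativity of $p_j(t)$ and $q_{k,j}(t)$ is implicit in their interpretation as rates, and is what makes the cancellation of the internal-influence terms go in the favorable direction.
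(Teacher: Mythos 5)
Your proof is correct and is essentially the paper's own argument: both rewrite the master equation \eqref{eq:master-eqs-general} using $q_{k,\Omega}\ge 0$ and $[S_{\Omega,k}]\le[S_{\Omega}]$ to obtain the differential inequality $\frac{d[S_\Omega]}{dt}\le -p_\Omega(t)[S_\Omega]$, and then conclude by an integrating-factor (Gronwall-type) comparison with the initial condition \eqref{eq:master-eqs-genera-icl}. No gaps.
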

	\begin{proof}
		Since $q_{k,\Omega} \ge 0$ and $[S_{\Omega}] - [S_{\Omega,k}] \ge 0$,
		the result follows from equation~\eqref{eqs:master-eqs-general}
		for~$[S_{\Omega}]$, together with the fact that $\frac{dx}{dt}-p(t)x\le0$, $x(0)=x_0$ implies that $x(t)\le y(t)$, where $y(t)$ is
                  the solution of $\frac{dy}{dt}-p(t)y=0$, $y(0)=0$, which can be proven similarly to Lemma~\ref{lem:odeineq} below.
	\end{proof}

	\begin{corollary}
		\label{cor:M-1}
		Consider the Bass/{\rm SI} model~\eqref{eqs:Bass-SI-models-ME}.
		Let $M=1$. 
		Then 
		\begin{equation}
			\label{eq:[S]M=1}
			[S](t;p(t),I^0,M=1)   = (1-I^0) e^{-\int_0^t p}, \qquad t \ge 0.
		\end{equation}
	\end{corollary}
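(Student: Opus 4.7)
The plan is to directly apply Theorem~\ref{thm:master-eqs-general} in the special case $M=1$, where the master-equation system collapses to a single linear ODE whose solution can be written down in closed form.

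First I would set $\mathcal{M}=\{1\}$ and take the only nontrivial subset $\Omega=\{1\}$, so that $\Omega^{\mathrm c}=\emptyset$. The internal-influence sum $\sum_{k\in\Omega^{\mathrm c}}q_{k,\Omega}(t)[S_{\Omega,k}]$ in~\eqref{eq:master-eqs-general} is then an empty sum and vanishes, and similarly for the $q$-term in the prefactor, leaving the scalar ODE
\begin{equation*}
\frac{d[S]}{dt}=-p(t)\,[S],\qquad [S](0)=1-I^0,
\end{equation*}
where the initial condition comes from~\eqref{eq:master-eqs-genera-icl} with a single factor $1-I^0$.

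Next I would integrate this first-order linear ODE by the standard integrating factor $e^{\int_0^t p}$, yielding $[S](t)=(1-I^0)\,e^{-\int_0^t p(s)\,ds}$, which is exactly~\eqref{eq:[S]M=1}.

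There is no real obstacle here: once one recognizes that $\Omega^{\mathrm c}=\emptyset$ kills every internal-influence term, the statement is just the explicit solution of a scalar linear ODE with a time-dependent coefficient, and uniqueness of that solution (together with the probabilistic fact that $[S](t)$ solves the master equation, per Theorem~\ref{thm:master-eqs-general}) closes the argument.
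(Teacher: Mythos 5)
Your proposal is correct and is exactly the paper's argument: the paper simply states that the corollary follows from the master equations~\eqref{eqs:master-eqs-general}, and your write-up fills in the same computation, namely that for $M=1$ the set $\Omega^{\mathrm c}$ is empty so the master equation reduces to $\frac{d[S]}{dt}=-p(t)[S]$ with $[S](0)=1-I^0$, which integrates to~\eqref{eq:[S]M=1}.
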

	\begin{proof}
		This follows from eq.~\eqref{eqs:master-eqs-general}.
	\end{proof}

	\section{Bass and SI~models on complete networks}
	\label{sec:complete}
	
	Consider a complete homogeneous network where everyone is connected to each other, all the nodes have the same initial condition, and all the nodes and all the edges have the same weights.
	Thus,\,\footnote{The internal influences have been normalized, so that the maximal  internal influence $\sum_{k \in \cal M}q_{k,j}(t)$ is independent of the network size~$M$.}  
	\begin{subequations}
		\label{eqs:complete-network}
		\begin{equation}
			\label{eq:p_j_q_j_complete-homog}
			I_j^0\equiv I^0,\qquad p_j(t)\equiv p(t),\qquad
			q_{k,j}(t)\equiv  \frac{q(t)}{M-1} \mathbbm{1}_{ k \not= j},
			%	   		=
			%   		\begin{cases}
				%   			\frac{q}{M-1}, & \ {\rm if }\ k \not= j,\\
				%   			0, &   \ {\rm if }\ k=j,
				%   		\end{cases}
			\qquad  \qquad k,j \in {\cal M}.
                      \end{equation}     
		Hence, the adoption rate of each of the nonadopting nodes is, see~\eqref{eq:lambda_j(t)-Bass-model-heterogeneous-tools},  
		\begin{equation}
			\label{eq:Bass-model-homog-complete-lambda_j}
			\lambda^{\rm complete}(t) := p(t)+ \frac{q(t)}{M-1} N(t), 
			\qquad N:=\sum_{k \in \cal M} X_k,
		\end{equation}
		where $N(t)$
		is the number of adopters/infected in the network.
		Note that we allow the weights to be time-dependent, which is essential for the 
		analysis of time-dependent promotional strategies on networks~\cite{optimal-promotion}. We  assume that the parameters satisfy 
		%\begin{subequations}
		%	\label{eqs:assumptions-Bass/SI}
		\begin{equation}
			0 \le I^0<1, \qquad 	q(t)>0, \quad  p(t) \ge 0, \quad t>0,
		\end{equation}
		and 
		\begin{equation}
			I^0>0  \qquad  \text{or} \qquad  p(t)>0, \quad t>0. 
		\end{equation}
	\end{subequations}
Furthermore, we assume that $p(t)$ and $q(t)$ are piecewise continuous.
	
	We denote the expected adoption/infection level in the Bass/SI~model 
	on the complete network~\eqref{eqs:complete-network} by~$f^{\rm complete}$.
	Specifically, in the case of the Bass model,
	\begin{equation}
		I^0=0, \qquad 	q(t)>0, \quad  p(t) > 0, \quad t>0,
	\end{equation}
	and the expected adoption level 
	is $f^{\rm complete}_{\rm Bass}:=f^{\rm complete}(\cdot,I^0=0)$.
	In the case of the SI~model,
	\begin{equation}
		0<I^0<1, \qquad 	q(t)>0, \quad  p(t) \equiv 0, \quad t>0,
	\end{equation}
	and expected infection level 
	is $f^{\rm complete}_{\rm SI}:=f^{\rm complete}(\cdot,p=0)$.
	Note that on the complete network~\eqref{eqs:complete-network}, 
	$f^{\rm complete}_{\rm Bass}$ and $f^{\rm complete}_{\rm SI}$ are directly related, as
		\begin{equation*}
			f^{\rm complete}_{\rm SI}(t;q(t),I^0,M) = I^0+
			\left(1-I^0 \right) 
			f^{\rm complete}_{\rm Bass}\left(t;\widetilde{p}(t) ,\widetilde{q}(t),\widetilde{M}\right),
		\end{equation*}
		where $\widetilde{p}(t):=\frac{MI^0}{M-1} q(t)$,
		$\widetilde{q}(t) :=  \frac{\widetilde{M}-1}{M-1}q(t)$, and~$\widetilde{M}:=M(1-I^0)$
               which follows from the fact that the internal influence of the initial infected individuals on the remaining individuals can
                be viewed equivalently as an external influence on the smaller network from which the initial infected individuals are excluded.
		%	and $f^{\rm complete}(t;p,q,M)$ is the expected adoption level in the Bass model~\eqref{eqs:Bass-model-homog-complete}. % on a complete network.
	
	\subsection{Monotone convergence of~$f^{\rm complete}$}
			
	\label{sec:complete-infinite}
		Consider the  Bass/SI~model~(\ref{eqs:Bass-SI-models-ME},\ref{eqs:complete-network}) 
	on a  complete network. As the network size~$M$ increases, each nonadopter can be influenced by more and more adopters, but the influence rate~$q_{k,j} = \frac{q(t)}{M-1}$ of each adopter decays. 
	Therefore, a priori, it is not clear whether $f^{\rm complete}$  should be
	monotonically decreasing or increasing in~$M$. 
	The following lemma settles this issue:
	%   $f^{\rm complete}$ is monotonically increasing in~$M$:
	\begin{lemma}
		\label{lem:f_complete_monotone_in_M}
		Let $t>0$. % and let~\eqref{eqs:assumptions-Bass/SI} hold. 
		Then $f^{\rm complete}(t;p(t),q(t),I^0,M)$  is  monotonically increasing in~$M$.
	\end{lemma}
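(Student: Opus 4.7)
The plan is to work with the master equations from Theorem~\ref{thm:master-eqs-general} and exploit the full symmetry of the complete homogeneous network~\eqref{eqs:complete-network}. Because every coefficient depends only on whether two indices coincide, $[S_\Omega](t)$ depends on $\Omega$ only through its cardinality; I will write $[S_n]_M$ for this common value when $|\Omega|=n$ in the $M$-node network. Since $f_j=1-[S_j]$ and all $f_j$ coincide, one has $f^{\rm complete}(t;\cdot,M)=1-[S_1]_M(t)$, so the lemma reduces to showing $[S_1]_{M+1}(t)\le [S_1]_M(t)$ for all $t\ge 0$.

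I will establish the stronger claim $[S_n]_{M+1}(t)\le [S_n]_M(t)$ for every $1\le n\le M$ by downward induction on $n$, which is precisely the ``top-down'' strategy advertised in the introduction: start at the top of the hierarchy, where the master equation decouples, and descend one layer at a time. For the base case $n=M$, the top-level equation~\eqref{eq:master-eqs-general} of the $M$-node system has $\Omega^{\mathrm c}=\emptyset$, yielding the closed form $[S_M]_M(t)=(1-I^0)^M e^{-M\int_0^t p}$; meanwhile, Corollary~\ref{cor:[S_Omega]<=[S_Omega^0]e^-pt} applied to any $M$-element subset of the $(M+1)$-node network bounds $[S_M]_{M+1}(t)$ by the same quantity.

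For the inductive step, assume $[S_{n+1}]_{M+1}\le [S_{n+1}]_M$, set $u:=[S_n]_{M+1}-[S_n]_M$, and note $u(0)=0$. Subtracting the two instances of~\eqref{eq:master-eqs-general}, whose off-diagonal rates are $a(t):=n(M+1-n)q(t)/M$ and $b(t):=n(M-n)q(t)/(M-1)$ respectively, a short rearrangement gives
\begin{equation*}
u'=-\bigl(np(t)+a(t)\bigr)u+\bigl(b(t)-a(t)\bigr)\bigl([S_n]_M-[S_{n+1}]_M\bigr)+a(t)\bigl([S_{n+1}]_{M+1}-[S_{n+1}]_M\bigr).
\end{equation*}
The key algebraic identity $a(t)-b(t)=\tfrac{n(n-1)q(t)}{M(M-1)}\ge 0$, combined with the elementary inclusion $[S_n]_M\ge [S_{n+1}]_M$ and the inductive hypothesis, makes both forcing terms nonpositive. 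Hence $u'+(np+a)u\le 0$ with $u(0)=0$, and the linear-ODE comparison argument used in Corollary~\ref{cor:[S_Omega]<=[S_Omega^0]e^-pt} yields $u\le 0$. The main obstacle I anticipate is identifying the correct inductive quantity and verifying the sign $a\ge b$: a priori the dilution of the per-edge weight $q/(M-1)\mapsto q/M$ could push the comparison the wrong way, and only the identity above confirms that the downward induction closes at every level, with equality occurring precisely at the target level $n=1$.
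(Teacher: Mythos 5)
Your proof is correct and takes essentially the same route as the paper's: pass to the symmetry-reduced quantities $[S^n]$, difference the reduced master equations for the $M$- and $(M+1)$-node networks, and close a downward induction from $n=M$ to $n=1$ with a Gronwall-type comparison, the sign being controlled by exactly the identity $a-b=\tfrac{n(n-1)q}{M(M-1)}\ge 0$ that the paper also computes. The only (cosmetic) differences are that you settle the base case via Corollary~\ref{cor:[S_Omega]<=[S_Omega^0]e^-pt} instead of differencing the top-level ODEs, and that you conclude only the non-strict inequality $u\le 0$, whereas the paper's version of the same argument yields strict positivity of $y_n=-u$ for $t>0$ (by noting that the forcing terms are strictly negative), which it later needs for the strict bounds of Corollary~\ref{cor:f_complete<f_Bass}.
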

\begin{proof}
	See Section~\ref{sec:f^complete-monotone-in-M}.
\end{proof}

	Using the monotonicity in~$M$, we can prove the convergence 
	of $f^{\rm complete}$  as $M \to \infty$
and compute its limit:  
\begin{theorem}
	\label{thm:Niu(t)}
	Consider the Bass/{\rm SI}  model~{\rm (\ref{eqs:Bass-SI-models-ME},\ref{eqs:complete-network})} on a complete network.
	%	Let~\eqref{eqs:assumptions-Bass/SI} hold. 
	% ~$f^{\rm complete}$ denote the expected adoption level  in  the  Bass model~\eqref{eqs:Bass-model-homog-complete} on a complete network. 
	Then 
	\begin{equation}
		\label{eq:f_Bass-Niu(t)}
		\lim_{M \to \infty} f^{\rm complete}(t;p(t),q(t),I^0,M) = f^{\rm compart.}(t;p(t),q(t),I^0),
	\end{equation}
	where $f^{\rm compart.}$ is the solution of the equation
	\begin{equation}
		\label{eq:ODE-for-f^complete_infty}
		\frac{df}{dt} = (1-f)\big(p(t)+q(t)f\big), \qquad f(0) = I^0.
	\end{equation}
	%	where the limit is uniform in~$t$. Moreover, the rate of convergence is $\frac1M$, i.e.,
	%	\begin{equation}
		%		\label{eq:f_Bass-rate-of-convergence}
		%		f^{\rm complete}(t;p,q,M) - f^{\rm compart.}_{\rm Bass}(t;p,q) = O\left(\frac1M\right), \qquad M \to \infty.
		%	\end{equation}
\end{theorem}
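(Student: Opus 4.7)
The plan is to combine Lemma~\ref{lem:f_complete_monotone_in_M} with the master equations~\eqref{eqs:master-eqs-general}. Since $f^{\rm complete}(t;M)$ is nondecreasing in $M$ and bounded above by $1$, the pointwise limit $f_\infty(t) := \lim_{M \to \infty} f^{\rm complete}(t;p,q,I^0,M)$ exists. By the symmetry of the complete homogeneous network~\eqref{eqs:complete-network} and the independence of the initial data, $[S_\Omega](t)$ depends on $\Omega$ only through $|\Omega|$; I will denote this common value by $s_n(t;M)$, so that $f^{\rm complete}(t;M) = 1 - s_1(t;M)$.

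Taking $|\Omega|=1$ in~\eqref{eq:master-eqs-general} (so that $p_\Omega=p$ and the $M-1$ terms $q_{k,\Omega}=q/(M-1)$ sum to $q$), the $M$-dependence of the coefficients cancels, giving
\begin{equation*}
\frac{d f^{(M)}}{dt} \,=\, -\frac{ds_1}{dt} \,=\, (p+q)(1-f^{(M)}) - q\, s_2(t;M).
\end{equation*}
The theorem therefore reduces to showing that $s_2(t;M)\to (1-f_\infty(t))^2$ as $M\to\infty$: once this is established, $f_\infty$ satisfies $f' = (1-f)(p+qf)$ with $f(0)=I^0$, and uniqueness for this ODE identifies $f_\infty$ with $f^{\rm compart.}$.

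For the easy direction, I would use that the Bass/SI dynamics are attractive, so by a monotone-coupling / FKG-type argument the events $\{X_m(t)=0\}$ are positively correlated, giving $s_2\ge s_1^2$. Substituting into the identity above yields $\tfrac{d}{dt} f^{(M)}\le (1-f^{(M)})(p+qf^{(M)})$, and a comparison principle analogous to Corollary~\ref{cor:[S_Omega]<=[S_Omega^0]e^-pt} yields $f^{(M)}\le f^{\rm compart.}$ for every $M$, hence $f_\infty\le f^{\rm compart.}$ in the limit.

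The main obstacle is the matching asymptotic lower bound, i.e., propagation of chaos $s_2(t;M)\to (1-f_\infty(t))^2$. My plan is to push the monotonicity-in-$M$ analysis of Lemma~\ref{lem:f_complete_monotone_in_M} one level deeper to show that each $s_n(t;M)$ is monotone in $M$, so that $S_n(t):=\lim_M s_n(t;M)$ exists for every $n\ge 1$. Passing $M\to\infty$ in the symmetric master equation
\begin{equation*}
\frac{ds_n}{dt} = -n\Bigl(p + \tfrac{M-n}{M-1}q\Bigr)s_n + n\,\tfrac{M-n}{M-1}\,q\, s_{n+1}
\end{equation*}
gives the infinite hierarchy $\frac{dS_n}{dt} = -n(p+q)S_n + nq\, S_{n+1}$ with $S_n(0)=(1-I^0)^n$. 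The product ansatz $S_n=S_1^n$ solves this hierarchy, and uniqueness (via the a priori exponential-decay bound $S_n(t)\le (1-I^0)^n e^{-n\int_0^t p}$ from Corollary~\ref{cor:[S_Omega]<=[S_Omega^0]e^-pt}, combined with iteration of the hierarchy so that the innermost term decays geometrically in the depth) forces $S_2=S_1^2=(1-f_\infty)^2$, closing the proof. The two delicate steps will be extending Lemma~\ref{lem:f_complete_monotone_in_M} to all $s_n$ and rigorously interchanging the $M\to\infty$ limit with the time derivative in the hierarchy.
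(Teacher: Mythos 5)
Your main line of attack coincides with the paper's proof. The paper likewise reduces to the symmetric quantities $s_n=[S^n]$ (Lemma~\ref{lem:master-eq-homog-complete}); the proof of Lemma~\ref{lem:f_complete_monotone_in_M} already establishes monotonicity in~$M$ of \emph{every} $s_n$, not just $s_1$ (see~\eqref{eq:[S^n]-monotone-in-M}), so the first ``delicate step'' you list is already done there; and the limit/derivative interchange is handled exactly as you would hope, by passing to the limit in the \emph{integral} form of the hierarchy via dominated convergence and then differentiating (Lemma~\ref{lem:Steve-convergence-homog-monotone}). The product ansatz $S_n=S_1^n$ then collapses the limit hierarchy to~\eqref{eq:ODE-for-f^complete_infty}. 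Where you differ: (i) the FKG step giving $s_2\ge s_1^2$ and hence $f^{(M)}\le f^{\rm compart.}$ is plausible but redundant --- once $s_2(t;M)\to S_1^2$ is established you get the full limit in one stroke, and the paper obtains the bound $f^{\rm complete}<f^{\rm compart.}$ as Corollary~\ref{cor:f_complete<f_Bass} \emph{from} the monotone convergence rather than as an input to it; (ii) you are right that identifying the limit $\{S_n\}$ with the product solution requires a uniqueness statement for the infinite hierarchy --- a point the paper passes over in silence --- but your sketched iteration needs care: $k$ iterations produce the factor $n(n+1)\cdots(n+k-1)\,\big(\int_0^t q\big)^k/k!=\binom{n+k-1}{k}\big(\int_0^t q\big)^k$, which tends to $0$ for fixed $n$ only when $\int_0^t q<1$, so you must either exploit the geometric decay of $S_{n+k}$ in $k$ or run the argument on short time intervals and bootstrap. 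Neither difference is a gap in your plan; the second is, if anything, a gap in the paper's.
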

\begin{proof}
	See Section~\ref{sec:proof-thm-Niu(t)}.
\end{proof}
From Lemma~\ref{lem:f_complete_monotone_in_M} and Theorem~\ref{thm:Niu(t)}
we have 
	\begin{corollary}
	\label{cor:monotone-convergence-f_complete_to_Bass}
	%Let~\eqref{eqs:assumptions-Bass/SI} hold. 
	The convergence of~$f^{\rm complete}$ to~$f^{\rm compart.}$ is monotone in~$M$. 
\end{corollary}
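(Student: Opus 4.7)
The plan is to observe that this corollary is essentially a tautological consequence of combining the two preceding results, and to spell out why ``monotone convergence'' is the right description rather than merely ``convergence plus monotonicity.''

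First, I would fix an arbitrary $t>0$ and consider the sequence $a_M := f^{\rm complete}(t;p(t),q(t),I^0,M)$ indexed by $M \in \NN$. By Lemma~\ref{lem:f_complete_monotone_in_M}, this sequence is monotonically nondecreasing in $M$. By Theorem~\ref{thm:Niu(t)}, it converges to the limit $a_\infty := f^{\rm compart.}(t;p(t),q(t),I^0)$ as $M \to \infty$.

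Next, I would invoke the elementary fact from real analysis that any monotonically nondecreasing sequence of real numbers which converges must approach its limit from below; that is, $a_M \le a_\infty$ for every $M$, and $a_M \nearrow a_\infty$. This is what is meant by the convergence being monotone in $M$. Since $t>0$ was arbitrary, the claim holds pointwise in $t$.

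There is no real obstacle here — the content of the corollary is already carried by Lemma~\ref{lem:f_complete_monotone_in_M} and Theorem~\ref{thm:Niu(t)}, and the only thing the proof needs to articulate is the logical step that a monotone sequence with a limit is bounded above by that limit. If anything, the only care required is to point out that the monotonicity statement is pointwise in $t$, so that the conclusion is likewise a pointwise (rather than, say, uniform) monotone convergence statement; no additional regularity in $t$ needs to be invoked to establish the corollary as stated.
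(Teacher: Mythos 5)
Your proof is correct and matches the paper's own argument, which likewise derives the corollary directly by combining Lemma~\ref{lem:f_complete_monotone_in_M} (monotonicity in $M$) with Theorem~\ref{thm:Niu(t)} (convergence to $f^{\rm compart.}$). The extra observation that a monotone convergent sequence approaches its limit from below is a harmless elaboration of the same one-line deduction.
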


We can also use the monotonicity to obtain lower and upper bounds for~$f^{\rm complete}$:
\begin{corollary}
	\label{cor:f_complete<f_Bass}
	Consider the Bass/SI  model~{\rm (\ref{eqs:Bass-SI-models-ME},\ref{eqs:complete-network})} on a complete network. Then 
	\begin{equation}
		\label{eq:f_complete<f_Bass}
		1-	(1-I^0)e^{-\int_0^t p}< f^{\rm complete}(t;p(t),q(t),I^0,M)<f^{\rm compart}(t;p(t),q(t),I^0), \qquad 
		t>0, \quad M=2,3, \dots
	\end{equation}
\end{corollary}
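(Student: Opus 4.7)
The plan is to derive both inequalities purely from the results already established in the excerpt, so that the corollary reduces to a two-line consequence of the monotonicity lemma together with the $M=1$ closed form and the Niu-type limit.

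First, I would compute the extreme endpoints of the monotone sequence $\{f^{\rm complete}(t;p,q,I^0,M)\}_{M\ge 1}$ explicitly. At $M=1$ there are no admissible pairs $(k,j)$ with $k\ne j$, so $q_{k,j}\equiv 0$ and Corollary~\ref{cor:M-1} gives $[S](t;p,I^0,M=1)=(1-I^0)e^{-\int_0^t p}$, whence
\begin{equation*}
f^{\rm complete}(t;p(t),q(t),I^0,M=1)=1-[S](t;p,I^0,M=1)=1-(1-I^0)e^{-\int_0^t p}.
\end{equation*}
At the other end, Theorem~\ref{thm:Niu(t)} supplies
\begin{equation*}
\lim_{M\to\infty}f^{\rm complete}(t;p(t),q(t),I^0,M)=f^{\rm compart.}(t;p(t),q(t),I^0).
\end{equation*}

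Next, I would invoke Lemma~\ref{lem:f_complete_monotone_in_M}, which asserts strict monotonicity in $M$ for $t>0$. For any $M\ge 2$ the strict inequality $f^{\rm complete}(t;\cdot,M)>f^{\rm complete}(t;\cdot,M=1)$ yields the lower bound in~\eqref{eq:f_complete<f_Bass}. For the upper bound, combine strict monotonicity with Corollary~\ref{cor:monotone-convergence-f_complete_to_Bass}: for each fixed $M\ge 2$,
\begin{equation*}
f^{\rm complete}(t;\cdot,M)<f^{\rm complete}(t;\cdot,M+1)\le \sup_{M'\ge 2}f^{\rm complete}(t;\cdot,M')=f^{\rm compart.}(t;p,q,I^0),
\end{equation*}
so that the inequality is strict as required.

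The only delicate point is the strictness at the upper end. Non-strict monotonicity plus convergence would only give $f^{\rm complete}(t;\cdot,M)\le f^{\rm compart.}$. The gap is bridged by applying the strict version of Lemma~\ref{lem:f_complete_monotone_in_M} to the single step from $M$ to $M+1$: the value at $M+1$ is already strictly larger than the value at $M$, and the subsequent nondecreasing tail can only stay at or above that value, forcing $f^{\rm compart.}$ to exceed $f^{\rm complete}(t;\cdot,M)$ strictly. I therefore expect the main (and essentially only) obstacle to lie upstream, in confirming that the monotonicity provided by Lemma~\ref{lem:f_complete_monotone_in_M} is strict for $t>0$ under the standing parameter assumptions $q(t)>0$ and either $I^0>0$ or $p(t)>0$; once that is in hand, the corollary follows immediately as sketched.
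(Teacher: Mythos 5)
Your proposal is correct and follows essentially the same route as the paper, whose proof is a one-line citation of Lemma~\ref{lem:f_complete_monotone_in_M}, eq.~\eqref{eq:[S]M=1}, and Theorem~\ref{thm:Niu(t)}. Your extra care about strictness at the upper end is justified and resolved exactly as you anticipate: the paper's proof of the monotonicity lemma establishes $y_1(t)>0$, i.e.\ strict increase at each step, so one strict step plus the nondecreasing convergent tail yields the strict upper bound.
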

\begin{proof}
	This follows from Lemma~\ref{lem:f_complete_monotone_in_M}, eq.~\eqref{eq:[S]M=1}, 
	and Theorem~\ref{thm:Niu(t)}.
\end{proof}

\subsection{Time-independent parameters}

When $p$ and $q$ are independent of time and $I^0=0$, we obtain 
from Theorem~\ref{thm:Niu(t)} the well-known compartmental limit
\begin{equation}
	\label{eq:f^complete_Bass_-lim-time-independent}
	\lim_{M \to \infty} f^{\rm complete}_{\rm Bass}(t;p,q,M) =
	f^{\rm compart.}_{\rm Bass}(t;p,q), \qquad 
	f^{\rm compart.}_{\rm Bass}  :=\frac{1-e^{-(p+q)t}}{1+\frac{q}{p}e^{-(p+q)t}}, 
\end{equation}
where $f^{\rm compart.}_{\rm Bass}$ is the solution of the compartmental Bass model~\cite{Bass-69}
$$
f'(t)=(1-f)(p+qf), \qquad f(0)=0.
$$
The monotone convergence of~$f^{\rm complete}_{\rm Bass}$ to~$f^{\rm compart.}_{\rm Bass}$ is illustrated in Figure~\ref{fig:f_complete_converge2_f_bass}.

\begin{figure}[!h]
	\begin{center}
		\scalebox{0.6}{\includegraphics{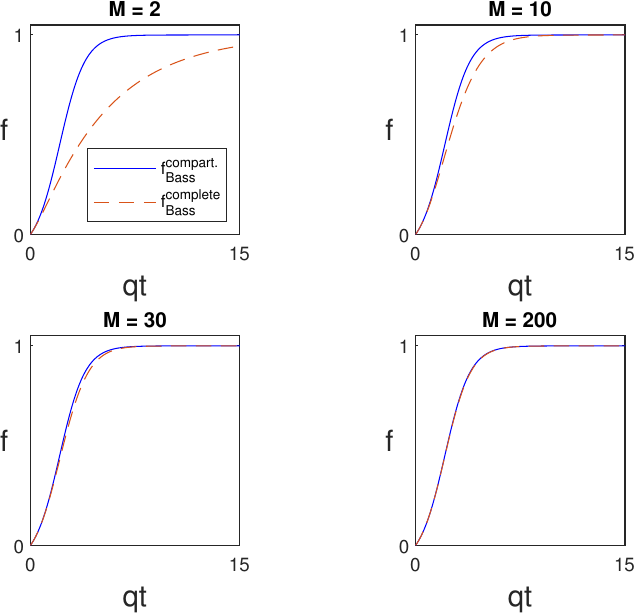}}
		\caption{Monotone convergence of~$f^{\rm complete}_{\rm Bass}$ (dashes) to~$f^{\rm compart.}_{Bass}$ (solid).  Here~$\frac{q}{p} = 10$, $I^0=  0$, and $M=2,10,30,200$.}
		\label{fig:f_complete_converge2_f_bass}
	\end{center}
\end{figure}
Similarly, when $p=0$ and $q$ is independent of time, we obtain
\begin{equation}
	\label{eq:f^complete_SI-lim-time-independent}
	\lim_{M \to \infty} f^{\rm complete}_{\rm SI}(t;q,I^0,M) =
	f^{\rm compart.}_{\rm SI}(t;q, I^0), \qquad 
	f^{\rm compart.}_{\rm SI}  := \frac{1}{1+(\frac1{I^0}-1)e^{-qt}},
\end{equation}
where $f^{\rm compart.}_{\rm SI}$ is the solution of the compartmental SI~model
$$
f'(t)=q(1-f)f, \qquad f(0)=I^0.
$$

The limit~\eqref{eq:f^complete_Bass_-lim-time-independent} was proved in~\cite{DCDS-23,Niu-02}.
To the best of our knowledge, this is the first rigorous derivation of the limit~\eqref{eq:f^complete_SI-lim-time-independent}. 
Furthermore, this is the first proof that $f^{\rm complete}_{\rm Bass}$ and $f^{\rm complete}_{\rm SI}$ converge {\em monotonically} to their respective limits.

\subsection{Proof of Lemma~\ref{lem:f_complete_monotone_in_M} and Theorem~\ref{thm:Niu(t)}} 

	\subsubsection{Reduced master equations}
	
	As noted, there are $2^M-1$ master equations for $\{[S_{\Omega}] \}_{ \Omega \subset {\cal M}}$, see Theorem~\ref{thm:master-eqs-general}.
	Because of the symmetry of the complete network~\eqref{eqs:complete-network}, however, 
	$[S_{\Omega}]$  only depends on the number of nodes in~$\Omega$, and not
	on the identity of the nodes in~$\Omega$. 
	Therefore, we can denote by
	\begin{equation}
		\label{eq:[S^n]-complete}
		[S^n]:= [S_{\Omega}  \large\mid  |\Omega|=n]
	\end{equation}
	the probability that for any given subset of $n$~nodes,  all its nodes are nonadopters at time~$t$. This substitution replaces the $2^M-1$ master equations~\eqref{eqs:master-eqs-general} for $\{[S_{\Omega}] \}_{ \Omega \subset {\cal M}}$ with a reduced system of $M$~equations for~$\{[S^n]\}_{n=1}^M$:
	\begin{lemma}
		\label{lem:master-eq-homog-complete}
		The reduced master equations for the Bass/{\rm SI}  model~{\rm (\ref{eqs:Bass-SI-models-ME},\ref{eqs:complete-network})} on a complete network are 
		\begin{subequations}
			\label{eqs:master-homog-complete}
			\begin{align}
				\label{eq:master-homog-complete}
				\frac{d[S^n]}{dt}& = -n\left(p(t)+q(t)\frac{M-n}{M-1}\right)[S^n]+nq(t)\frac{M-n}{M-1}[S^{n+1}], \qquad n=1,\dots,M-1,
				\\	
				\label{eq:master-homog-complete-M}
				\frac{d[S^M]}{dt}& =-Mp(t) [S^M],
			\end{align}
			%	where~$	[S^n]$ is given by~\eqref{eq:[S^n]-complete},  
			subject to the initial conditions
			\begin{equation}
				\label{eq:master-homog-complete-IC}
				[S^n](0)=(1-I^0)^n,\qquad n\in {\cal M}.
			\end{equation}
		\end{subequations}
	\end{lemma}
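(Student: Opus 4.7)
The plan is to first justify the symmetry reduction defining $[S^n]$, and then substitute the homogeneous parameters~\eqref{eq:p_j_q_j_complete-homog} into the general master equations~\eqref{eqs:master-eqs-general} and simplify.

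\textbf{Step 1: Symmetry reduction.} I would start by noting that under the homogeneous assumptions~\eqref{eqs:complete-network}, the parameters $p_j$, $I_j^0$, and $q_{k,j}$ are invariant under any permutation $\sigma$ of ${\cal M}$ (since $p_j\equiv p$, $I_j^0\equiv I^0$, and $q_{k,j}=\frac{q}{M-1}\mathbbm{1}_{k\not=j}$ depends only on whether $k=j$). Hence if $\{[S_\Omega](t)\}_\Omega$ solves~\eqref{eqs:master-eqs-general}, then so does $\{[S_{\sigma(\Omega)}](t)\}_\Omega$, with the same initial data $\prod_{m\in\Omega}(1-I^0)=(1-I^0)^{|\Omega|}$. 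By uniqueness of solutions to the linear ODE system~\eqref{eqs:master-eqs-general}, $[S_\Omega](t)=[S_{\sigma(\Omega)}](t)$ for every permutation $\sigma$, so $[S_\Omega](t)$ depends only on $|\Omega|$. This makes the definition~\eqref{eq:[S^n]-complete} of $[S^n]$ unambiguous.

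\textbf{Step 2: Computing the coefficients.} Fix $\Omega$ with $|\Omega|=n$. Since $p_m\equiv p(t)$, we get $p_\Omega(t)=np(t)$. For any $k\in\Omega^{\rm c}$ (so $k\neq m$ for all $m\in\Omega$),
\[
q_{k,\Omega}(t)=\sum_{m\in\Omega}\tfrac{q(t)}{M-1}\mathbbm{1}_{k\neq m}=\tfrac{nq(t)}{M-1}.
\]
Since $|\Omega^{\rm c}|=M-n$, summing over $k\in\Omega^{\rm c}$ gives $\sum_{k\in\Omega^{\rm c}}q_{k,\Omega}(t)=\tfrac{n(M-n)q(t)}{M-1}$. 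Also, for each $k\in\Omega^{\rm c}$, $|\Omega\cup\{k\}|=n+1$, so by Step~1, $[S_{\Omega,k}](t)=[S^{n+1}](t)$, and
\[
\sum_{k\in\Omega^{\rm c}}q_{k,\Omega}(t)\,[S_{\Omega,k}](t)=\tfrac{n(M-n)q(t)}{M-1}[S^{n+1}](t).
\]

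\textbf{Step 3: Assembling the reduced equations.} Substituting into~\eqref{eq:master-eqs-general} yields, for $1\leq n\leq M-1$,
\[
\frac{d[S^n]}{dt}=-\left(np(t)+\tfrac{n(M-n)q(t)}{M-1}\right)[S^n]+\tfrac{n(M-n)q(t)}{M-1}[S^{n+1}],
\]
which is~\eqref{eq:master-homog-complete}. For $n=M$, we have $\Omega^{\rm c}=\emptyset$ so both sums vanish, leaving $\frac{d[S^M]}{dt}=-Mp(t)[S^M]$, which is~\eqref{eq:master-homog-complete-M}. The initial condition~\eqref{eq:master-homog-complete-IC} follows immediately from~\eqref{eq:master-eqs-genera-icl} with $I_m^0\equiv I^0$.

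The only non-routine step is the symmetry reduction in Step~1; the remainder is direct bookkeeping. I do not anticipate any genuine obstacle, though one should be careful that when $n=M$ the term involving $[S^{n+1}]$ does not occur, which is why~\eqref{eq:master-homog-complete-M} is stated separately.
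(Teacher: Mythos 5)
Your proposal is correct and follows essentially the same route as the paper, whose proof is the one-line observation that the lemma follows from substituting~\eqref{eqs:complete-network} and~\eqref{eq:[S^n]-complete} into the general master equations~\eqref{eqs:master-eqs-general}. Your Step~1 merely makes explicit (via permutation invariance plus uniqueness for the linear ODE system) the symmetry claim that the paper asserts informally before the lemma, and your coefficient bookkeeping in Steps~2--3 matches the stated equations exactly.
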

	\begin{proof}
		This follows from the substitution of~\eqref{eqs:complete-network} and~\eqref{eq:[S^n]-complete} in the master equations~\eqref{eqs:master-eqs-general}.
	\end{proof}
	
	\subsubsection{Monotonicity of $\{[S^n]\}$ in $M$}
	\label{sec:f^complete-monotone-in-M}
	
	Let us recall the following auxiliary result:
	\begin{lemma}\label{lem:odeineq}
		\label{lem:dy_dt+cy>0}
		Let $\alpha(t):\mathbb{R}\to\mathbb{R}$ be piecewise continuous, and let  $y(t)$ satisfy the differential inequality   
		$$
		\frac{dy}{dt}+ \alpha(t)y >0, \quad t>0, \qquad y(0) = 0.
		$$
		Then $y(t)>0$ for $t>0$.
	\end{lemma}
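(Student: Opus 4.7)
The plan is to apply the standard integrating-factor trick, carefully adapted to allow $\alpha$ to be merely piecewise continuous. I would define the continuous antiderivative $A(t) := \int_0^t \alpha(s)\,ds$ and set $\mu(t) := e^{A(t)}$, which is strictly positive for all $t$. At every point where $\alpha$ is continuous, the product rule identifies $\mu(t)\bigl(y'(t)+\alpha(t)y(t)\bigr)$ with $\tfrac{d}{dt}\bigl(\mu(t)y(t)\bigr)$, so the hypothesis makes this derivative strictly positive on each open interval between consecutive discontinuities of $\alpha$.

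The key step is to upgrade this pointwise positivity to strict monotonicity of $\mu(t)y(t)$ on all of $[0,\infty)$. On every open interval between consecutive discontinuities of $\alpha$, the function $\mu y$ is $C^1$ with strictly positive derivative, hence strictly increasing there. Since $\mu$ and $y$ are continuous across the isolated discontinuities of $\alpha$ (continuity of $y$ is implicit in the assumption that the inequality $y'+\alpha y>0$ holds for all $t>0$, which in turn forces one-sided limits of $y'$ to be finite at the jumps of $\alpha$), these locally strict increases patch together into global strict monotonicity of $\mu(t)y(t)$ on $[0,\infty)$.

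Combining this with the initial condition $\mu(0)y(0) = 1\cdot 0 = 0$ yields $\mu(t)y(t) > 0$ for every $t > 0$, and dividing by the strictly positive factor $\mu(t)$ gives $y(t)>0$, as desired.

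The main obstacle, such as it is, is purely the bookkeeping associated with the jumps of $\alpha$: at such points the classical derivative of $y$ need not exist, so the argument must phrase monotonicity of $\mu y$ as a gluing statement across finitely (or at most countably, without accumulation in $[0,T]$) many exceptional points, rather than invoke a single application of the mean value theorem on all of $[0,t]$. This is routine, and no stronger regularity on $\alpha$ or $y$ is actually needed.
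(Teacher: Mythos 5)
Your proof is correct and uses exactly the same integrating-factor argument as the paper: multiply by $e^{\int_0^t\alpha}$, observe that $\frac{d}{dt}\bigl(e^{\int_0^t\alpha}y\bigr)>0$, and integrate from $0$ to $t$ using $y(0)=0$. The only difference is that you spell out the gluing across the isolated discontinuities of $\alpha$, which the paper leaves implicit in the word ``integrating.''
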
  
	\begin{proof}
		Multiplying the differential inequality by the integrating factor $e^{\int_0^t \alpha(s)\, ds}$ gives
		$$
		\frac{d}{dt} \left(e^{\int_0^t \alpha(s)\, ds}y\right) >0.
		$$
		Integrating between zero and $t$ and using the initial condition gives the result.
	\end{proof}

We are now ready to prove Lemma~\ref{lem:f_complete_monotone_in_M}:
	\begin{proof}[Proof of Lemma~\ref{lem:f_complete_monotone_in_M}]
		Let
		\begin{equation}
			\label{eq:y_n=S^n](t;M)-[S^n](t;M+1)}
			y_n(t):= [S^n](t;M)-[S^n](t;M+1),
			\qquad n=1, \dots, M,
			% \qquad 
			%Z_n(t):= [S^n](t;M+1)-[S^{n+1}](t;M+1). 
		\end{equation}
		where $[S^n](t;M)$ is the solution of the master equations~\eqref{eqs:master-homog-complete}. Then
		%	see~\eqref{eq:master-homog-complete-[S^n I]},
		\begin{subequations}
			\label{eqs:d_dt_y_M-proof-monotonicity}
			\begin{equation}
				\frac{dy_M}{dt} +Mp y_M =  z_M(t), \qquad 
				y_M(0) = 0,
			\end{equation}
			where 
			\begin{equation}
				z_M :=  q\Big([S^M](t;M+1)-[S^{M+1}](t;M+1)\Big).
			\end{equation}
		\end{subequations}
		%	where the strict inequality follows from~\eqref{eq:0<[S_Omega_cap_In]<1}.
 Note that 
		$
		[S^M](t;M+1) -[S^{M+1}](t;M+1)= [IS^{M}](t;M+1)>0$,
              where $[IS^M](t,M+1)$ denotes the probability that exactly one of $M+1$ nodes is an adopter/infected.
		Hence $z_M(t)>0$ for $t>0$. Therefore, applying Lemma~\ref{lem:dy_dt+cy>0} to~\eqref{eqs:d_dt_y_M-proof-monotonicity} shows that
		\begin{equation}
			\label{eq:y_M(t)>0-complete}
			y_M(t)>0, \qquad t>0.
		\end{equation}
		
		We can rewrite the master equations~\eqref{eqs:master-homog-complete} for $ n=1,\dots,M-1$ as
		$$
		\frac{d [S^n]}{dt}(t;M) = -n\left(p+q_M^n\right)[S^n](t;M)+nq_M^n[S^{n+1}](t;M), \quad [S^n](0;M)=1, \quad  q_M^n:=q\frac{M-n}{M-1}.
		$$
		Similarly,
		$$
		\frac{d [S^n]}{dt}(t;M+1) = -n\left(p+q_{M+1}^n\right)[S^n](t;M+1)+nq_{M+1}^n[S^{n+1}](t;M+1), \quad [S^n](0;M+1)=1.% \quad  q_{M+1}^n:=q\frac{M+1-n}{M}.
		$$
		Taking the difference of these two equations gives
		\begin{subequations}
			\label{eqs:d_dt_y_n-proof-monotonicity}
			\begin{equation}
				\frac{d y_n}{dt} +n\left(p+q_{M+1}^n\right) y_n = nq_{M+1}^n y_{n+1}+ z_n(t), \quad 
				y_n(0) = 0, \qquad n = 1, \dots, M-1,
			\end{equation}
			where
			\begin{equation}
				z_n = n (q_M^n-q_{M+1}^n) \big( -[S^n](t;M)+[S^{n+1}](t;M) \big).
			\end{equation}
		\end{subequations}
		Since 
		$$
		q_M^n-q_{M+1}^n= q\frac{M-n}{M-1}-q\frac{M+1-n}{M} = q\frac{1-n}{(M-1)M} \le 0,
		$$
		and
		$$
		-[S^n]+[S^{n+1}]=  -[IS^{n}]<0, \qquad t>0,
		$$
		%	where the strict inequality follows from~\eqref{eq:0<[S_Omega_cap_In]<1},
		we have that $ z_n\ge 0$ for $t>0$. 
		Therefore, applying Lemma~\ref{lem:dy_dt+cy>0} to~\eqref{eqs:d_dt_y_n-proof-monotonicity} shows that
		\begin{equation}
			\label{eq:y_n+1(t)>0-complete}
			y_{n+1}(t)>0, \quad t>0  \qquad \Rightarrow \qquad y_{n}(t)>0, \quad t>0, \qquad\qquad n = 1, \dots, M-1.
		\end{equation}
		
		From relations~\eqref{eq:y_M(t)>0-complete} and~\eqref{eq:y_n+1(t)>0-complete} 
		we get by reverse induction on~$n$ that
		\begin{equation}
			\label{eq:[S^n]-monotone-in-M}
			y_n(t)>0, \qquad n \in {\cal M},
		\end{equation}
		i.e., that $\{[S^n](t;M)\}_{n \in \cal M}$ are monotonically decreasing in~$M$.
		In particular,
		$$
		0<y_1(t) = [S](t;M)- [S](t;M+1) = f^{\rm complete}(t;M+1)-f^{\rm complete}(t;M).
		$$
	\end{proof}
	
		\subsubsection{Convergence of~$f^{\rm complete}$}
	   \label{sec:proof-thm-Niu(t)}

	Next, we utilize the monotonicity of~$\{[S^n]\}$ in~$M$ to prove the 
	convergence of $f^{\rm complete}$:
	\begin{proof}[Proof of Theorem~\ref{thm:Niu(t)}]
		Consider the master equations~\eqref{eqs:master-homog-complete}.
		If we formally fix~$n$ and let $M\to\infty$, we get the ODE  
		\begin{equation}
			\label{eq:master_inf-hom}
			\frac{d [S^n_{\infty}]}{dt}=-n(p+q)[S^n_{\infty}]+nq[S^{n+1}_{\infty}], \qquad [S^n_{\infty}](0) = (1-I_0)^n, \qquad n \in \mathbb{N}.
		\end{equation}
		This does not immediately imply that $\lim_{M \to \infty} [S^n] =  [S^n_{\infty}]$.
		Indeed, this limit does not follow from the standard theorems for continuous dependence of solutions of ODEs on parameters, 
		because the number of ODEs in~\eqref{eqs:master-homog-complete} increases with~$M$, and becomes infinite in the limit, and also because of the 
		presence of the unbounded factor~$n$ on the right-hand sides  of~\eqref{eqs:master-homog-complete} and~\eqref{eq:master_inf-hom}. In Lemma~\ref{lem:Steve-convergence-homog-monotone} below, however,
		we will rigorously prove that
		\begin{equation}
			\label{eq:S^M->S-complete-hom-monotone}
			\lim_{M \to \infty} [S^n](t;M)  =  [S^n_{\infty}](t),  \qquad n \in \mathbb{N}.
		\end{equation}
		Therefore,  
		we can proceed to solve the infinite system~\eqref{eq:master_inf-hom}. 
		The ansatz 
		\begin{equation}
			\label{eq:inf_sol_hom}
			[S^n_{\infty}]=  [S_{\infty}]^n,\qquad n \in \mathbb{N}
		\end{equation}
		transforms the system~\eqref{eq:master_inf-hom} into
		\begin{equation*}
			n[S_{\infty}]^{n-1}\frac{d[S_{\infty}]}{dt} = -n(p+q)[S_{\infty}]^n+nq[S_{\infty}]^{n+1},
			\qquad [S_{\infty}](0)=1-I^0.
		\end{equation*}
		Dividing by $n[S_{\infty}]^{n-1}$, we find that the infinite system reduces to the single ODE %and initial condition 
		\begin{equation}
			\label{eq:homogeneous_Bass_frac-[S]-infty}
			\frac{d}{dt}[S_{\infty}]=-\left(p+q\right)[S_{\infty}]+  q[S_{\infty}]^2,
			%=-[S_{\infty}] (p+q(1-[S_{\infty}]), 
			\qquad [S_{\infty}](0)=1-I^0.
		\end{equation}
		
		Let $f^{\rm compart.} = 1- [S_{\infty}]$.  Then~$f^{\rm compart.}$
		satisfies~\eqref{eq:ODE-for-f^complete_infty}.  
		%	 denote the nonadoption probability in 
		%	the compartmental Bass model~\eqref{eq:homogeneous_Bass_frac}. Then
		%	\begin{equation}
			%		\label{eq:homogeneous_Bass_frac-[S]}
			%		\frac{d [S_{\rm Bass}]}{dt}%= -[S_{\rm Bass}]\left(p+q(1-[S_{\rm Bass}])\right)
			%		=-(p+q)[S_{\rm Bass}]+ q[S_{\rm Bass}]^2  , \qquad [S_{\rm Bass}](0)=1.
			%	\end{equation}
		%This is exactly  ODE~\eqref{eq:homogeneous_Bass_frac-[S]} for $[S_{\rm Bass}] = 1-f^{\rm compart.}_{\rm Bass}$. 
		%	Since the ODES~\eqref{eq:homogeneous_Bass_frac-[S]-infty} and~\eqref{eq:homogeneous_Bass_frac-[S]} are identical, 
		%	\begin{equation}
			%		\label{eq:s_infty_hom}
			%		[S_{\infty}] = [S_{\rm Bass}] = 1-f^{\rm compart.}_{\rm Bass}.
			%	\end{equation}
		The limit~\eqref{eq:f_Bass-Niu(t)} follows
		from~\eqref{eq:S^M->S-complete-hom-monotone} and~\eqref{eq:inf_sol_hom} with $n=1$.
	\end{proof}
	
	In the proof of Theorem~\ref{thm:Niu(t)} we used the following convergence result:
	\begin{lemma}
		\label{lem:Steve-convergence-homog-monotone}
		%Let~\eqref{eqs:assumptions-Bass/SI} hold. Then 
		For any $n \in \mathbb{N}$ and $t \ge 0$, the solution~$[S^n](t;M)$  of  equations~\eqref{eqs:master-homog-complete}
		converges monotonically as $M \to \infty$ to the solution $[S^n_\infty](t)$ of~\eqref{eq:master_inf-hom}.
		%	\begin{equation}
			%		\label{eq:master_inf-hom-monotone}
			%		\frac{d}{dt}[S^n_{\infty}](t)=-n(p+q)[S^n_{\infty}]+nq[S^{n+1}_{\infty}], \qquad [S^n_{\infty}](0) = 1, \qquad n \in \mathbb{N}.
			%	\end{equation}
	\end{lemma}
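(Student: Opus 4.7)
The plan is to combine the monotonicity of $M\mapsto [S^n](t;M)$ (already established in the proof of Lemma~\ref{lem:f_complete_monotone_in_M}) with a dominated-convergence passage to the limit in the integral form of the reduced master equations, and to finish by a uniqueness argument for the infinite system~\eqref{eq:master_inf-hom}.

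First, inequality~\eqref{eq:[S^n]-monotone-in-M} asserts that for every fixed $n$ and $t\ge 0$, the sequence $\{[S^n](t;M)\}_M$ is decreasing in~$M$, and since $[S^n](t;M)\ge 0$ the monotone pointwise limit
$$L^n(t) \eqdef \lim_{M\to\infty}[S^n](t;M)$$
exists and the convergence is monotone. This already yields the ``monotonically'' clause of the lemma. What remains is to identify $L^n$ with $[S^n_\infty]$.

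Next, I would integrate~\eqref{eq:master-homog-complete} to obtain the equivalent integral equation
$$[S^n](t;M) = (1-I^0)^n + \int_0^t \Big(-n\big(p+q\tfrac{M-n}{M-1}\big)[S^n](s;M) + nq\tfrac{M-n}{M-1}[S^{n+1}](s;M)\Big)\,ds.$$
Since $0\le[S^n],[S^{n+1}]\le 1$ and $0\le \tfrac{M-n}{M-1}\le 1$, the integrand is dominated uniformly in $M$ by $n\big(p(s)+2q(s)\big)$, which is integrable on $[0,t]$ because $p$ and $q$ are piecewise continuous. The dominated convergence theorem, together with the pointwise convergence $[S^n](\,\cdot\,;M)\to L^n$ and $[S^{n+1}](\,\cdot\,;M)\to L^{n+1}$ just established, permits the passage to the limit $M\to\infty$, giving
$$L^n(t) = (1-I^0)^n + \int_0^t \big(-n(p+q)\,L^n + nq\,L^{n+1}\big)\,ds,$$
which is the integral form of~\eqref{eq:master_inf-hom}. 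Hence $L^n$ is a solution of the infinite system.

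The main obstacle is uniqueness of the solution to~\eqref{eq:master_inf-hom}, needed to identify $L^n$ with the particular solution $[S^n_\infty]$. A key a priori tool is the bound $[S^n](t;M)\le (1-I^0)^n e^{-n\int_0^t p}$, obtained by substituting $[S^{n+1}]\le[S^n]$ into~\eqref{eq:master-homog-complete} to get $\tfrac{d[S^n]}{dt}\le -np\,[S^n]$; this bound is inherited by both $L^n$ and $[S^n_\infty]=[S_\infty]^n$, and under the standing assumption that $I^0>0$ or $p(t)>0$ for $t>0$ it decays geometrically in~$n$. Setting $w^n\eqdef L^n-[S^n_\infty]$, applying the integrating factor $e^{n\int_0^s(p+q)}$ to $\tfrac{dw^n}{dt}+n(p+q)w^n=nq\,w^{n+1}$ with $w^n(0)=0$ yields the representation
$$w^n(t)=\int_0^t nq(s)\,e^{-n\int_s^t(p+q)}\,w^{n+1}(s)\,ds.$$
Iterating this identity in~$n$ and exploiting the geometric decay of~$|w^{n+K}|$ as $K\to\infty$ will force $w^n\equiv 0$ for every~$n$, completing the identification $L^n=[S^n_\infty]$. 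I expect this uniqueness step to be the main technical difficulty, since the coupling of $w^n$ to~$w^{n+1}$ obstructs a direct Grönwall estimate and requires careful use of the exponential-in-$n$ a priori bound to make the iteration converge.
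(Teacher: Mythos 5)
Your core argument is exactly the paper's: monotonicity in $M$ (from the proof of Lemma~\ref{lem:f_complete_monotone_in_M}) plus nonnegativity gives a pointwise monotone limit, and dominated convergence applied to the integral form of~\eqref{eq:master-homog-complete} shows that this limit satisfies the integral form of~\eqref{eq:master_inf-hom}; the paper stops there, taking $[S^n_\infty]$ to be \emph{defined} as this limit (adding only a short remark that measurability of the limit plus the integral identity gives continuity, hence differentiability, hence the ODE). Where you differ is in the final uniqueness step, which the paper omits entirely --- and you are right that something of this kind is actually needed later, in the proof of Theorem~\ref{thm:Niu(t)}, to identify the limit with the product-form solution $[S_\infty]^n$; so your instinct is sound and goes beyond the paper. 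Two cautions about your sketch of that step, though. First, the geometric decay $|w^n(t)|\le 2(1-I^0)^n e^{-n\int_0^t p}$ is not uniform near $t=0$ in the Bass case ($I^0=0$), where the bound equals $1$ at $t=0$, and the inner integrals in your iteration reach down to $s=0$; so the decay in $n+K$ cannot simply be pulled out of the iterated integral. Second, even granting a bound of the form $|w^{n+K}|\le C$, $K$-fold iteration of the Duhamel formula produces the factor $n(n+1)\cdots(n+K-1)\,\bigl(\int_0^t q\bigr)^K/K!\sim K^{n-1}\bigl(\int_0^t q\bigr)^K/(n-1)!$, which tends to zero only when $\int_0^t q<1$; you therefore need either a subdivision of $[0,t]$ into intervals with $\int q<1$ followed by a continuation argument (using $w^n\equiv 0$ on the previous interval as the new initial data), or a sharper estimate. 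With that repair the uniqueness step closes, but as written it is the one place where your argument does not yet go through.
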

	\begin{proof}
		Let $n \in \mathbb{N}$ and let $M \ge n$. 
		Taking the integral of ODE~\eqref{eq:master-homog-complete} for $[S^n]$ 
		from zero to $t$ and using the initial condition~\eqref{eq:master-homog-complete-IC} 
		gives 
		\begin{equation}
			\label{eq:[S^n](t;M)-integral form}
			[S^n](t;M)-1 = -n
			\int_0^t \left(p(s)+q(s)\frac{M-n}{M-1}\right)[S^n](s;M) \,ds +n\frac{M-n}{M-1}\int_0^t q(s)[S^{n+1}](s;M) \,ds.
		\end{equation}
		Let us consider the limit of~\eqref{eq:[S^n](t;M)-integral form} as $M \to \infty$. 
		Since $[S^n](t;M)$ is monotonically decreasing in~$M$, see~\eqref{eq:y_n=S^n](t;M)-[S^n](t;M+1)} and~\eqref{eq:[S^n]-monotone-in-M}, and 
		since  $[S^n] \ge  0$ as a probability,
		this implies that $[S^n](t;M)$ converges pointwise as $M \to \infty$ 
		to some limit~$[S^n_\infty](t)$.
		Therefore, as $M \to \infty$,
		the left-hand side of~\eqref{eq:[S^n](t;M)-integral form} converges to 
		$[S^n_\infty]-1$. 
		In addition, since $[S^n]$ is a probability,  $0 \le [S^n](t;M) \le 1$, and so by
		the dominated convergence theorem, the integrals of~$[S^n]$ and~$[S^{n+1}]$ on the 
		right-hand side of~\eqref{eq:[S^n](t;M)-integral form}
		converge to the integrals of the limits. 
		Since the coefficients also converge, the limit of~\eqref{eq:[S^n](t;M)-integral form} as $M \to \infty$ is
		\begin{equation}
			\label{eq:[S^n_infty](t)-integral form}
			[S^n_\infty](t)-1 = -n
			\int_0^t \left(p(s)+q(s)\right)[S^n_\infty](s) \,ds +n\int_0^t q(s)[S^{n+1}_\infty](s) \,ds.
		\end{equation}
		
		Since  $[S^n_\infty](t)$ is the pointwise limit of a sequence of
		measurable functions, it is also measurable. Therefore, it follows from~\eqref{eq:[S^n_infty](t)-integral form} that it is 
		continuous, hence differentiable. Differentiating~\eqref{eq:[S^n_infty](t)-integral form},
		we conclude that  $[S^n_\infty]$ satisfies the limit ODE~\eqref{eq:master_inf-hom}.
	\end{proof}

		\section{Bass and SI~models on circles}
		\label{sec:1D_1_sided}

		Consider now the Bass and SI~models on the circle,
		where each node can only be influenced by its left and right neighbors. 
		%Let us denote 
		%by ${\cal N}^{\overrightarrow{\rm circle}}(M,p,q)$
		%the %the Bass  model~\eqref{eq:Bass-model-heterogeneous-tools} on 
		%one-sided homogeneous circular network with $M$~nodes, such that each node can only be
		%influenced by its left neighbor at the rate of~$q$.
		%Thus,
		%\begin{equation}
		%	\label{eq:p_j_q_j_onesided_circle}
		%	p_j\equiv p,\qquad
		%	q_{k,j}=  q\, \mathbbm{1}_{(j-k)\, {\rm mod} \,  M=1}, 
		%	\qquad \quad k,j\in {\cal M}.
		%\end{equation}
		%%\end{subequations}
		%We also denote 
		%by~${\cal N}^{\rm circle}(M,p,q^{\rm L},q^{\rm R})$
		%the two-sided homogeneous circular network 
		%with $M$~nodes. 
		We can allow peer effects to be {\em anisotropic}, so that the 
		influence rates of the left and right neighbors
		are~$q^{\rm L}$  and~$q^{\rm R}$, respectively.
		Thus,
		\begin{subequations}
			\label{eqs:circular-network}
			% 	\begin{equation}
				% 		p_j\equiv p,\qquad j\in {\cal M}, 
				% 	\end{equation}
			% 	and
			\begin{equation}
				\label{eq:p_j_q_j_twosided_circle}
				I_j^0\equiv I^0, \quad p_j\equiv p(t),\quad
				q_{k,j}\equiv
				q^{\rm L}(t) \, \mathbbm{1}_{(j-k)\, {\rm mod} \,  M=1}+
				q^{\rm R}(t) \, \mathbbm{1}_{(j-k)\, {\rm mod} \,  M=-1},
				% 		\begin{cases}
					% 			q^{\rm L},&  {\rm if}\ (j-k)\, {\rm mod} \,  M=1,\\ 
					% 			q^{\rm R},&   {\rm if}\ (k-j)\, {\rm mod} \,  M=1,\\ 
					% 			0,&  \text{otherwise},
					% 		\end{cases}
				\qquad  k,j\in {\cal M}.
			\end{equation} 
			%	Both networks are homogeneous in  $p_j \equiv p$ and in $q_j \equiv q^{\rm L}+q^{\rm R}$.
			%	
			%	{\em The one-sided circle~\eqref{eq:p_j_q_j_onesided_circle}
				%		is  a special case of %the Bass model~\eqref{eqs:Bass-model-homog-circle} on 
				%		the two-sided circle~\eqref{eq:p_j_q_j_twosided_circle} with $q^{\rm L} = q$ and
				%		$q^{\rm R} = 0$}. Therefore, in what follows, we only analyze the two-sided case.  The anisotropy in~$q^{\rm L}$ and~$q^{\rm R}$ allows for a smooth transition between  the ``most anisotropic'' 
			%	one-sided circle, and  
			%	the isotropic two-sided circle with $q^{\rm L}=q^{\rm R}$.
			Hence, the adoption rate of~$j$ is
			\begin{equation}
				\label{eq:Bass-model-homog-circle-lambda_j}
				\lambda^{\rm circle}_j(t) := p+ q^{\rm L}(t) X_{j-1}(t)+q^{\rm R}(t) X_{j+1}(t), 
			\end{equation}
			where $X_{0}:=X_M$ and $X_{M+1}:=X_1$. The model parameters satisfy 
			%\begin{subequations}
			%	\label{eqs:assumptions-Bass/SI}
			\begin{equation}
				0 \le I^0<1, \qquad 	q^{\rm L}(t), q^{\rm R}(t)\ge 0, \quad  	q^{\rm L}(t)+q^{\rm R}(t)>0, \quad  p(t) \ge 0, \quad t>0,
			\end{equation}
			and 
			\begin{equation}
				I^0>0  \qquad  \text{or} \qquad  p(t)>0, \quad t>0. 
			\end{equation}
		\end{subequations}
	Furthermore, we assume that $p(t)$, $q^{\rm L}(t)$, and $q^{\rm R}(t)$ are piecewise continuous.
		We denote the expected adoption/infection level in the Bass/{\rm SI} model~{\rm (\ref{eqs:Bass-SI-models-ME},\ref{eqs:circular-network})} on the circle by~$f^{\rm circle}$.
		Specifically, we denote the expected adoption level in the Bass model
		by $f^{\rm circle}_{\rm Bass}:=f^{\rm circle}(\cdot,I^0=0)$, 
		and the expected infection level in the SI~model
		by~$f^{\rm circle}_{\rm SI}:=f^{\rm circle}(\cdot,p=0)$.
		
		\subsection{Monotone convergence of~$f^{\rm circle}$}
		\label{sec:circle-infinite}
		Consider the Bass/SI model~{\rm (\ref{eqs:Bass-SI-models-ME},\ref{eqs:circular-network})} on a circle. In this case, we see similar results to the complete network.
		
		\begin{lemma}
			\label{lem:f_circle_increases_with_M}
			Let $t>0$. 
			Then $f^{\rm circle}(t;p,q,I^0,M)$ is   monotonically increasing in~$M$. 
		\end{lemma}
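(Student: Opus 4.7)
The plan is to imitate the proof of Lemma~\ref{lem:f_complete_monotone_in_M}, replacing the ``size of subset'' symmetry that is available on the complete network by the ``arc of consecutive nodes'' symmetry that is available on the circle.

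\textbf{Step 1: reduced master equations on arcs.} By the rotational symmetry of the circle~\eqref{eqs:circular-network}, if $\Omega$ is an \emph{arc} (a set of consecutive nodes), then $[S_\Omega](t)$ depends only on $n:=|\Omega|$, and I will denote it by $A_n(t;M)$. Moreover, in the master equation~\eqref{eq:master-eqs-general} for such an arc, the only $k\in\Omega^{\rm c}$ with $q_{k,\Omega}>0$ are the two immediate neighbors of the arc, and adjoining either of them again yields an arc (of length $n+1$). Setting $q(t):=q^{\rm L}(t)+q^{\rm R}(t)$, the master equations therefore close on arcs and give the triangular system
\begin{equation*}
\frac{dA_n}{dt} = -(np+q) A_n + q A_{n+1}, \quad n=1,\ldots,M-1, \qquad \frac{dA_M}{dt}=-Mp\,A_M,
\end{equation*}
with initial data $A_n(0)=(1-I^0)^n$. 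Crucially, for $n<M$ these equations have the same form on any circle, so the only $M$-dependence sits in the boundary equation at $n=M$.

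\textbf{Step 2: reverse induction on~$n$.} As in Section~\ref{sec:f^complete-monotone-in-M}, define $y_n(t):=A_n(t;M)-A_n(t;M+1)$ for $n=1,\ldots,M$; I would then show by reverse induction on~$n$ that $y_n(t)>0$ for $t>0$. For the base case $n=M$, since on the circle of size~$M+1$ the same arc of length~$M$ still has a neighboring node, subtracting the two ODEs yields
\begin{equation*}
\frac{dy_M}{dt}+Mp\,y_M = q\bigl(A_M(t;M+1)-A_{M+1}(t;M+1)\bigr), \qquad y_M(0)=0,
\end{equation*}
and the right-hand side equals $q$ times the probability that a designated arc of~$M$ nodes on the circle of size~$M+1$ is entirely nonadopters while the remaining node is adopter/infected, which is strictly positive for $t>0$ under the standing assumptions on $I^0$ and $p$. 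Lemma~\ref{lem:dy_dt+cy>0} then gives $y_M(t)>0$. For $1\le n\le M-1$, $A_n(t;M)$ and $A_n(t;M+1)$ satisfy the same ODE, so
\begin{equation*}
\frac{dy_n}{dt}+(np+q) y_n = q\,y_{n+1}, \qquad y_n(0)=0,
\end{equation*}
and Lemma~\ref{lem:dy_dt+cy>0} propagates positivity from $y_{n+1}$ to $y_n$. Taking $n=1$ yields $[S](t;M)>[S](t;M+1)$, i.e., $f^{\rm circle}(t;M+1)>f^{\rm circle}(t;M)$.

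\textbf{Main obstacle.} The nontrivial step is the closure in Step~1; this is where the one-dimensional geometry of the circle enters, via the fact that the neighbors of an arc are again arcs once adjoined. Once this closure is obtained, the inductive argument of Step~2 is in fact \emph{cleaner} than its complete-network counterpart, because the coefficient $\frac{M-n}{M-1}$ appearing there is replaced by~$1$, so the $M$-dependence of the reduced system is confined to the terminal equation at $n=M$, and one does not need the auxiliary sign computation on $q_M^n-q_{M+1}^n$.
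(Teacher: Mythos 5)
Your proposal is correct and follows essentially the same route as the paper: reduce to the arc probabilities $[S^n]$ via translation invariance (your Step~1 reproves what the paper imports as Lemma~\ref{lem_S^k_both} from~\cite{OR-10}), then run reverse induction on $y_n = [S^n](t;M)-[S^n](t;M+1)$ using Lemma~\ref{lem:dy_dt+cy>0}, with the strict positivity entering only at the base case $n=M$. Your closing observation that the $M$-dependence is confined to the terminal equation, so no analogue of the $q_M^n - q_{M+1}^n$ sign computation is needed, matches the paper's own remark contrasting the circle with the complete network.
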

			Using the monotonicity in~$M$, we can prove the convergence 
		of $f^{\rm circle}$  as $M \to \infty$
		and compute its limit:  
		\begin{theorem}
			\label{thm:f^circle->f^1D}
			Consider the Bass/{\rm SI} model~{\rm (\ref{eqs:Bass-SI-models-ME},\ref{eqs:circular-network})} on the circle. 
			% and let~$f^{\rm circle}$ denote the expected adoption level in the Bass  models~\eqref{eqs:one_sided_circle_all} and~\eqref{eqs:Bass-model-homog-circle} on the homogeneous one-sided and 	two-sided circles. 
			Then
			\begin{subequations}
				\label{eqs:f^1D}
				\begin{equation}
					\label{eq:f_1D_onesided-M->infinity}
					\lim_{M\to\infty}f^{\rm circle}(t;p(t),q(t),I^0,M) = f^{\rm 1D}(t;p(t),q(t),I^0),
				\end{equation}
				%uniformly in~$t$, 
				where $f^{\rm 1D}$, the expected level of adoption/infection for the Bass/SI model
                                  on the one-dimensional lattice with  nearest-neighbor interactions,
				is the solution of
				\begin{equation}
					\label{eq:f^1D_der}
					\frac{d f}{dt}= (1-f)\Big(p(t)+q(t)\big(1-(1-I^0)e^{- \int_0^t p(\tau)}\big)\Big), \qquad f(0)=I^0.
				\end{equation}
			\end{subequations}
		\end{theorem}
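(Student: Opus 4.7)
The plan is to mirror the three-step structure used for the complete network in Section~\ref{sec:complete}: reduce the master equations using rotational symmetry, prove monotonicity in $M$ by reverse induction with Lemma~\ref{lem:odeineq}, and pass to the infinite-population limit via dominated convergence before solving the limit ODE system with an explicit ansatz.

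The first step is to derive reduced master equations. By the rotational symmetry of~\eqref{eqs:circular-network}, $[S_\Omega](t)$ depends only on the shape of $\Omega\subset\mathbb{Z}/M\mathbb{Z}$ up to rotation. The crucial observation is that if $\Omega$ is an \emph{arc} of $n$ consecutive nodes with $n<M$, then its only external neighbors are the two nodes immediately adjacent to its endpoints, and each $\Omega\cup\{k\}$ for such $k$ is again an arc, this time of length $n+1$. Hence the arcs form a closed subsystem: writing $[S^n](t;M)$ for the common probability that any $n$ consecutive nodes are all nonadopters and setting $q(t):=q^{\rm L}(t)+q^{\rm R}(t)$, the master equations~\eqref{eqs:master-eqs-general} reduce to
\begin{equation*}
\frac{d[S^n]}{dt}=-\bigl(np(t)+q(t)\bigr)[S^n]+q(t)[S^{n+1}],\quad 1\le n\le M-1,\qquad \frac{d[S^M]}{dt}=-Mp(t)[S^M],
\end{equation*}
with $[S^n](0)=(1-I^0)^n$.

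For monotonicity (Lemma~\ref{lem:f_circle_increases_with_M}), I would set $y_n(t):=[S^n](t;M)-[S^n](t;M+1)$. At $n=M$, the equation for $[S^M](t;M)$ has no $q$ term while the one for $[S^M](t;M+1)$ does, so $\frac{dy_M}{dt}+Mp\,y_M = q\bigl([S^M](t;M+1)-[S^{M+1}](t;M+1)\bigr)$; the right-hand side equals $q$ times the probability that $M$ specified consecutive nodes are nonadopters while the remaining node is an adopter, which is strictly positive for $t>0$, so Lemma~\ref{lem:odeineq} gives $y_M(t)>0$. For $n<M$ the coefficients of the ODE are independent of $M$, so the difference satisfies the homogeneous-looking equation $\frac{dy_n}{dt}+(np+q)y_n = q\,y_{n+1}$; reverse induction via Lemma~\ref{lem:odeineq} then yields $y_n(t)>0$ for all $n$, and the case $n=1$ is Lemma~\ref{lem:f_circle_increases_with_M}.

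For convergence and the explicit limit ODE (Theorem~\ref{thm:f^circle->f^1D}), bounded monotonicity gives a pointwise limit $[S^n_\infty](t)$ for each $n\in\mathbb{N}$, and dominated convergence applied to the integral form of the ODEs, exactly as in Lemma~\ref{lem:Steve-convergence-homog-monotone}, shows that $[S^n_\infty]$ satisfies the infinite system $\frac{d[S^n_\infty]}{dt}=-(np+q)[S^n_\infty]+q[S^{n+1}_\infty]$ with $[S^n_\infty](0)=(1-I^0)^n$. I would then solve this with the separable ansatz $[S^n_\infty](t)=A(t)B(t)^n$: matching the $n$-dependent terms forces $B'/B=-p$, hence $B(t)=(1-I^0)e^{-\int_0^t p}$, while matching the remaining terms gives $A'/A=q(B-1)$ with $A(0)=1$. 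Setting $f^{\rm 1D}=1-[S^1_\infty]=1-AB$ and differentiating yields~\eqref{eq:f^1D_der} after a short simplification. The main obstacle I anticipate is identifying this ansatz, since the simpler product form $[S^n_\infty]=[S^1_\infty]^n$ used for the complete network fails here because consecutive nonadopters are correlated under nearest-neighbor interactions; once the separable form $A(t)B(t)^n$ is adopted, matching $n$-dependent against $n$-independent terms dictates both factors uniquely from the initial condition.
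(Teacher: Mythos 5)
Your proposal is correct and follows essentially the same route as the paper: the same reduced master equations for arcs of consecutive nodes (Lemma~\ref{lem_S^k_both}), the same monotonicity argument via $y_n$ and Lemma~\ref{lem:odeineq}, and the same passage to the limit through the integral form and dominated convergence (Lemma~\ref{lem:circle-convergence-monotone}). Your ansatz $[S^n_\infty]=A(t)B(t)^n$ is just a reparametrization of the paper's $[S^n_{\infty}]=\bigl((1-I^0)e^{-\int_0^t p}\bigr)^{n-1}[S^{\rm 1D}]$ with $[S^{\rm 1D}]=AB$, and both lead to~\eqref{eq:f^1D_der}.
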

	From Lemma~\ref{lem:f_circle_increases_with_M} and Theorem~\ref{thm:f^circle->f^1D} we have
	\begin{corollary}
		\label{cor:monotone-convergence-circle-to-f_1D}
		The convergence of~$f^{\rm circle}$ to~$f^{\rm 1D}$
		is monotone in~$M$. 
	\end{corollary}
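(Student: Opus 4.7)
The plan is to observe that this corollary is an immediate consequence of the two preceding results, in direct analogy with Corollary~\ref{cor:monotone-convergence-f_complete_to_Bass} for the complete network case. By Lemma~\ref{lem:f_circle_increases_with_M}, the sequence $\{f^{\rm circle}(t;p,q,I^0,M)\}_{M}$ is monotonically increasing in~$M$ for every fixed $t>0$. By Theorem~\ref{thm:f^circle->f^1D}, this same sequence converges pointwise (in $t$) to $f^{\rm 1D}(t;p,q,I^0)$ as $M\to\infty$. A pointwise-convergent sequence that is also monotone in its parameter converges monotonically, which is precisely the assertion.

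In more detail, for each fixed $t>0$ and each $M$, Lemma~\ref{lem:f_circle_increases_with_M} yields
\[
f^{\rm circle}(t;p,q,I^0,M) \le f^{\rm circle}(t;p,q,I^0,M+1),
\]
so taking $M\to\infty$ in the right-hand side and invoking Theorem~\ref{thm:f^circle->f^1D} gives $f^{\rm circle}(t;p,q,I^0,M)\le f^{\rm 1D}(t;p,q,I^0)$ for every~$M$; combined with the monotonicity, this is exactly the monotone convergence from below. Since no new analytic work is needed beyond citing the two prior results, there is no substantive obstacle; the only thing to be careful about is that the monotonicity holds for each fixed $t>0$, which is exactly the form in which Lemma~\ref{lem:f_circle_increases_with_M} is stated.
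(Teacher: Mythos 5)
Your proof is correct and is essentially identical to the paper's: the corollary is stated there as an immediate consequence of Lemma~\ref{lem:f_circle_increases_with_M} and Theorem~\ref{thm:f^circle->f^1D}, exactly as you argue. The extra detail you supply (passing to the limit in the monotonicity inequality to get the upper bound $f^{\rm circle}\le f^{\rm 1D}$) is the content the paper records separately in Corollary~\ref{cor:f_circle<f_1D}, so nothing is missing.
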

	\begin{proof}
		This follows from Lemma~\ref{lem:f_circle_increases_with_M} and Theorem~\ref{thm:f^circle->f^1D}.
	\end{proof}

	We can also use the monotonicity to obtain lower and upper bounds for~$f^{\rm circle}$: 
	\begin{corollary}
		\label{cor:f_circle<f_1D}
		Consider the Bass/{\rm SI} model~{\rm (\ref{eqs:Bass-SI-models-ME},\ref{eqs:circular-network})} on the circle. 
		Then  
		\begin{equation}
			\label{eq:f_circle<f_1D}
			1-(1-I^0)e^{-\int_0^t p}<	f^{\rm circle}(t;p(t),q(t),I^0,M)< f^{\rm 1D}(t;p(t),q(t),I^0), \qquad t>0, \quad M=2,3, \dots, 
		\end{equation}
		where $f^{\rm 1D}$ 
		is the solution of~\eqref{eq:f^1D_der}.
	\end{corollary}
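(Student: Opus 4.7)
The plan is to mirror exactly the argument used for Corollary~\ref{cor:f_complete<f_Bass}, combining the strict monotonicity of $f^{\rm circle}$ in $M$ (Lemma~\ref{lem:f_circle_increases_with_M}) with an identification of the lower bound as the $M=1$ value and the upper bound as the $M \to \infty$ limit (Theorem~\ref{thm:f^circle->f^1D}). So I would write the proof as a three-line sandwich argument with one short preliminary observation.

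For the lower bound, I would first observe that the $M=1$ circle coincides with the isolated single-node system. Indeed, inspecting \eqref{eq:p_j_q_j_twosided_circle} with $M=1$, both indicators $\mathbbm{1}_{(j-k)\,{\rm mod}\,M = \pm 1}$ vanish (since $0 \not\equiv \pm 1 \pmod 1$), so the unique node is driven by $p(t)$ alone. Corollary~\ref{cor:M-1}, equation \eqref{eq:[S]M=1}, then gives $[S](t;M=1) = (1-I^0)e^{-\int_0^t p}$, hence
\begin{equation*}
f^{\rm circle}(t;p(t),q(t),I^0,M=1) = 1 - (1-I^0)e^{-\int_0^t p}.
\end{equation*}
Applying Lemma~\ref{lem:f_circle_increases_with_M} to the pair $(M=1, M)$ for any $M \ge 2$ yields the strict lower bound in~\eqref{eq:f_circle<f_1D}.

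For the upper bound, I would use Lemma~\ref{lem:f_circle_increases_with_M} again to conclude that $\{f^{\rm circle}(t;\cdot,M)\}_{M \ge 2}$ is strictly increasing, and then invoke Theorem~\ref{thm:f^circle->f^1D} to identify its pointwise limit as $f^{\rm 1D}(t;\cdot)$. A strictly increasing sequence lies strictly below its limit, which gives the right inequality in~\eqref{eq:f_circle<f_1D}.

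The one step that needs a bit of care is verifying that the monotonicity supplied by Lemma~\ref{lem:f_circle_increases_with_M} is genuinely strict (not merely weak) and applies down to $M=1$; without strictness, one only obtains $\le$ rather than $<$. However, since that lemma is asserted for $t>0$ and its proof (analogously to Section~\ref{sec:f^complete-monotone-in-M}) produces a strictly positive difference $y_1(t) = [S](t;M) - [S](t;M+1) > 0$ via the ODE inequality of Lemma~\ref{lem:odeineq}, the strict form holds uniformly for all $M \ge 1$, so no additional work is needed here.
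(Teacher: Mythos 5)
Your proof is correct and takes essentially the same route as the paper, whose proof is the one-line citation of Lemma~\ref{lem:f_circle_increases_with_M}, eq.~\eqref{eq:[S]M=1}, and Theorem~\ref{thm:f^circle->f^1D}: lower bound from the $M=1$ value plus strict monotonicity, upper bound from strict monotonicity plus the identified limit. (One small quibble: your justification that the indicators in \eqref{eq:p_j_q_j_twosided_circle} vanish for $M=1$ because ``$0\not\equiv\pm1\pmod 1$'' is backwards, since every integer is congruent to every other modulo $1$; but the value $[S](t;M=1)=(1-I^0)e^{-\int_0^t p}$ follows from Corollary~\ref{cor:M-1} in any case, because the master equation for $\Omega={\cal M}$ contains no internal-influence terms, so your conclusion stands.)
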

	\begin{proof}
		This follows from Lemma~\ref{lem:f_circle_increases_with_M},  eq.~\eqref{eq:[S]M=1}, and Theorem~\ref{thm:f^circle->f^1D}.
	\end{proof}
\subsection{Time-independent parameters}
When $p$ and $q$ are independent of time, we obtain from
Theorem~\ref{thm:f^circle->f^1D}  the explicit limits
$$
\lim_{M \to \infty} f^{\rm circle}_{\rm Bass}(t;p,q,M) =
f^{\rm 1D}_{\rm Bass}(t;p,q), \qquad 
f^{\rm 1D}_{\rm Bass}  :=1-e^{-(p+q)t+ q\frac{1-e^{-pt}}{p}}, 
$$
and
$$
\lim_{M \to \infty} f^{\rm circle}_{\rm SI}(t;q,I^0,M) =
f^{\rm 1D}_{\rm SI}(t;q, I^0), \qquad 
f^{\rm 1D}_{\rm SI}: =1- (1-I^0) e^{-qI^0t}.
$$
The former limit was first derived in~\cite{OR-10} and rigorously justified in~\cite{DCDS-23}, the latter limit is new.
The monotone convergence of~$f^{\rm circle}$ to~$f^{\rm 1D}$ is illustrated in Figure~\ref{fig:f_circle_converge2_f_1D}. 

\begin{figure}[!h]
	% generated with compare_bass_exact1d1([0.01 0.001 0.0001],[0.6 0.6 0.6],[100 300 1000]);
	\begin{center}
		\scalebox{0.6}{\includegraphics{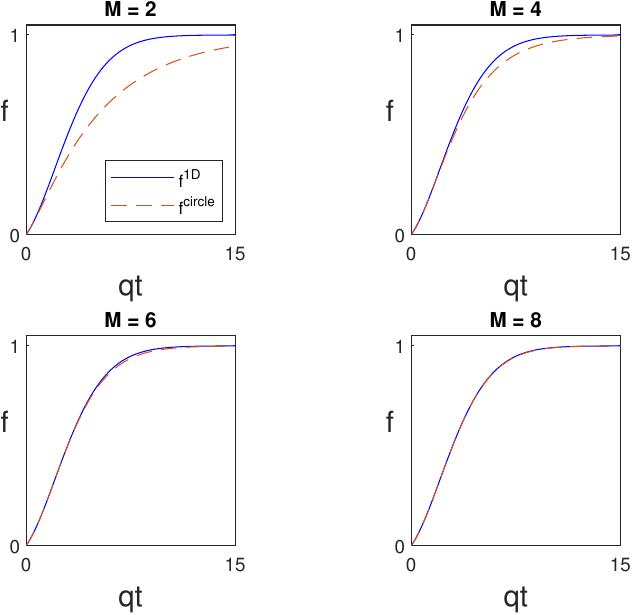}}
		\caption{Monotone convergence of~$f^{\rm circle}_{\rm Bass}$ (dashes) to~$f^{\rm 1D}_{\rm Bass}$ (solid).  Here~$\frac{q}{p} = 10$, $I^0=  0$, and $M=2,4,6,8$.}
		\label{fig:f_circle_converge2_f_1D}
	\end{center}
\end{figure}
		
			\subsection{Proof of Lemma~\ref{lem:f_circle_increases_with_M} and Theorem~\ref{thm:f^circle->f^1D}} 
		
		Let
		\[
		%S^{n} :=(\underbrace{S,\ldots, S}_{\times k}),
		S^{n} :=S_{j+1,\ldots, j+n },
		\qquad n  \in {\cal M},
		\] 
		denote the event that the~$k$
		adjacent nodes $\{j+1,\ldots, j+n \}$ are nonadopters at time~$t$, 
		and let~$[S^{n}]$ denote the probability of this event.
		Note that the probabilities $\{[S^{n}]\}$ are independent of~$j$, because of translation invariance. 
		Then we have 
		\begin{lemma}[\cite{OR-10}]
			\label{lem_S^k_both}
			The reduced master equations for 
			the Bass/{\rm SI} model~{\rm (\ref{eqs:Bass-SI-models-ME},\ref{eqs:circular-network})} on the circle are
			%for~$\{[S^n](t;p,q^{\rm R},q^{\rm L},M)\}_{k \in \cal M}$ are
			\begin{subequations}
				\label{eqs:master-two-sided-circle}
				\begin{align}
					\label{eq:1D_both_Sk}
					&\frac{d [S^n]}{dt}{}= - \big(np(t)+q(t)\big)[S^n]+q(t)[S^{n+1}], \qquad n=1, \ldots, M-1, 
					\\  \label{eq:1D_both_SM}
					&\frac{d [S^M]}{dt}{}=-Mp(t)[S^M],
				\end{align}
				where
				$q(t)=q^{\rm R}(t)+q^{\rm L}(t)$,
				subject to the initial conditions
				\begin{equation}
					\label{eq:1D_both_Sk_ic}
					[S^n](0)= (1-I^0)^n, \qquad\qquad n=1, \ldots, M.
				\end{equation}
			\end{subequations}
		\end{lemma}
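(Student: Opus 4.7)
The plan is to specialize the general master equations~\eqref{eq:master-eqs-general} of Theorem~\ref{thm:master-eqs-general} to the block subsets $\Omega = \{j+1, \ldots, j+n\}$ of $n$ consecutive nodes on the circle, and to exploit the translation invariance built into~\eqref{eqs:circular-network}.

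First I would justify that $[S^n]$ is well-defined as a function of $n$ alone. Because $I_m^0 \equiv I^0$, $p_j(t) \equiv p(t)$, and the interaction rates in~\eqref{eq:p_j_q_j_twosided_circle} depend on $k$ and $j$ only through $(j-k) \bmod M$, the joint law of any run of $n$ consecutive nodes is invariant under circular shifts. Hence $[S_{j+1,\ldots,j+n}](t)$ is independent of~$j$, and the notation $[S^n](t)$ is unambiguous. This will also be the reason that the reduced system closes: every probability appearing in the right-hand side of~\eqref{eq:master-eqs-general} for a block $\Omega$ turns out to be another block probability.

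Next I would evaluate each ingredient of~\eqref{eq:master-eqs-general} for $\Omega = \{j+1,\ldots,j+n\}$ with $1 \le n \le M-1$. The external term gives $p_\Omega(t) = np(t)$ at once. For the internal contributions, \eqref{eq:p_j_q_j_twosided_circle} shows that $q_{k,m}(t)$ is nonzero only when $k$ and $m$ are circular neighbors, so the only $k \in \Omega^{\rm c}$ with $q_{k,\Omega}(t) \ne 0$ are the two adjacent outside nodes: $k=j$, which acts as the left neighbor of $j+1 \in \Omega$ and contributes $q_{j,\Omega}(t) = q^{\rm L}(t)$, and $k = j+n+1$, which acts as the right neighbor of $j+n \in \Omega$ and contributes $q_{j+n+1,\Omega}(t) = q^{\rm R}(t)$. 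Hence
\begin{equation*}
	\sum_{k \in \Omega^{\rm c}} q_{k,\Omega}(t) = q^{\rm L}(t) + q^{\rm R}(t) = q(t).
\end{equation*}
Moreover, both $\Omega \cup \{j\} = \{j, j+1, \ldots, j+n\}$ and $\Omega \cup \{j+n+1\} = \{j+1, \ldots, j+n+1\}$ are blocks of $n+1$ consecutive nodes, so by the translation invariance established above both equal $[S^{n+1}](t)$. Substituting everything into~\eqref{eq:master-eqs-general} produces~\eqref{eq:1D_both_Sk}.

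For the boundary case $n = M$ one has $\Omega^{\rm c} = \emptyset$, the internal sums vanish, and only the external decay $-Mp(t)[S^M]$ remains, giving~\eqref{eq:1D_both_SM}. The initial conditions~\eqref{eq:1D_both_Sk_ic} follow immediately from~\eqref{eq:master-eqs-genera-icl} since $I_m^0 \equiv I^0$. The main (and quite mild) obstacle is verifying that the neighbor-counting argument remains correct at the boundary $n = M-1$, where the two ``outside'' neighbors $j$ and $j+n+1$ coincide modulo~$M$; in that case the single remaining outside node contributes both its $q^{\rm L}$ edge and its $q^{\rm R}$ edge into $\Omega$, so $q_{k,\Omega}(t)$ still sums to $q(t)$ and $\Omega \cup \{k\}$ is still a block of size $n+1 = M$, so~\eqref{eq:1D_both_Sk} persists unchanged.
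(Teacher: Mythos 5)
Your proof is correct and follows the route the paper intends: the paper states this lemma with a citation to~\cite{OR-10} and, as with the analogous complete-network case (Lemma~\ref{lem:master-eq-homog-complete}), the argument is exactly the substitution of the circular weights~\eqref{eq:p_j_q_j_twosided_circle} into the general master equations~\eqref{eq:master-eqs-general} for blocks of consecutive nodes, using translation invariance to close the system. Your treatment of the boundary case $n=M-1$, where the two outside neighbors coincide and jointly contribute $q^{\rm L}+q^{\rm R}=q$, is a worthwhile detail that the paper leaves implicit.
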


		\subsubsection{Monotonicity of $\{[S^n]\}$  in~$M$}

		\begin{proof}[Proof of Lemma~\ref{lem:f_circle_increases_with_M}]
			Let 
			$$
			y_n(t):= [S^n](t;M)-[S^n](t;M+1), 
			\qquad n\in {\cal M}.
			% \qquad 
			%Z_n(t):= [S^n](t;M+1)-[S^{n+1}](t;M+1). 
			$$ 
			By the master equations~\eqref{eqs:master-two-sided-circle},
			%\begin{align*}
			%	&\frac{d}{dt}[S^M](t;M) = 
			%	-Mp [S^M](t;M),  \qquad [S^M](0;M)=1,
			%	\\
			%	& \frac{d}{dt}[S^M](t;M+1) = 
			%	-Mp [S^M](t;M+1) - q ([S^{M}](t;M+1)-[S^{M+1}](t;M+1)), 
			%	\qquad [S^M](0;M+1)=1.
			%\end{align*}
			%Hence, $y_M$ satisfies
			$$
			\frac{d y_M}{dt} +Mp y_M =  q z_M(t), \qquad 
			y_M(0) = 0,
			$$
			where 
			$$
			z_M := [S^M](t;M+1)-[S^{M+1}](t;M+1)=  [IS^{M}](t;M+1),
			$$
			where  $[IS^{M}](t;M+1)$ is the probability that the nodes $\{1, \dots, M\}$ are nonadopters and the remaining node is an adopter.  
			Since $z_M(t)>0$ for $t>0$, 
			%Exercise~\ref{exer:[IS^(M-1)]-SI-circle}, 
			we have from Lemma~\ref{lem:dy_dt+cy>0} that 
			\begin{subequations}
				\label{eqs:monotone-S-circle}
				\begin{equation}
					y_M(t)>0, \qquad t>0.
				\end{equation}
				
				Similarly, by~\eqref{eqs:master-two-sided-circle},
				$$
				\frac{d y_n}{dt} +(np+q) y_n =  q y_{n+1}(t), \qquad 
				y_n(0) = 0, \qquad n = 1, \dots, M-1.
				$$
				Therefore, by Lemma~\ref{lem:dy_dt+cy>0},
				\begin{equation}
					y_{n+1}(t)>0, \quad t>0  \qquad \Rightarrow \qquad y_{n}(t)>0, \qquad t>0.
				\end{equation}
			\end{subequations}
			From relations~\eqref{eqs:monotone-S-circle}, 
			we get by reverse induction on~$n$ that
			\begin{equation}
				\label{eq:[S^n]-monotone-in-M-circle}
				y_n(t)>0, \qquad n \in {\cal M},
			\end{equation}
			i.e., that $\{[S^n](t;M)\}_{n \in \cal M}$ are monotonically decreasing in~$M$.
			In particular,
			$$
			0<y_1(t) = [S](t;M)- [S](t;M+1) = f^{\rm circle}(t;M+1)-f^{\rm circle}(t;M).
			$$
		\end{proof}
		
		%\begin{exercise}
		%	\label{exer:[IS^(M-1)]-SI-circle}
		%	Consider the Bass/{\rm SI} model~\eqref{eqs:Bass-model-homog-circle-II} on a circle with $M$~nodes. Show that
		%	$$
		%	[IS^{M-1}] = e^{-qt} I^0(1-I^0)^{M-1}.
		%	$$
		%\end{exercise}

		The result and proof of Lemma~\ref{lem:f_circle_increases_with_M} 
		are similar to those in Lemma~\ref{lem:f_complete_monotone_in_M}
		for a complete network. Note, however, that while the addition of nodes  
		is accompanied by a reduction of the weight of the edges in a complete network,
		this is not the case on the circle.

		We can motivate the result of Lemma~\ref{lem:f_circle_increases_with_M}
		as follows. Any node $j \in \cal M$  adopts either externally or internally. In the latter case, 
		the adoption of~$j$ can be traced back to 
		an {\em adoption path} that starts from another node~$i$ that adopted externally, and progresses through a series of internal adoptions that ultimately reach node~$j$. The probability of~$j$ to adopt either externally or 
		internally due to some adoption path of length $\le M-1$, is the same
		on circles with~$M$ and with $M+1$~nodes.
		On the circle with $M+1$ nodes, however, $j$~can also adopt due to adoption paths of length~$M$.
		Therefore, its overall adoption probability on the larger circle is higher.
		
		\subsubsection{Convergence of~$f^{\rm circle}$}
		
		%The explicit expressions~\eqref{eq:f_circle-explicit} for~$f^{\rm circle}$ 
		%and~\eqref{eqs:f^circle_SI-epxlicit} for~$f^{\rm circle}_{\rm SI}$
		%are cumbersome, and not very informative.
		%In~\cite{OR-10}, Fibich and Gibori showed that this expression simplifies considerably when~$M\to\infty$. Subsequently, Fibich, Golan, and Schochet~\cite{Bass-monotone-convergence-23,DCDS-23} 
		%justified rigorously the derivation in~\cite{OR-10}.
		
		\begin{proof}[Proof of Theorem~\ref{thm:f^circle->f^1D}]
			The proof is similar to that for complete networks (Theorem~\ref{thm:Niu(t)}).
			Our starting point are the master equations~\eqref{eqs:master-two-sided-circle}.
			If we formally fix~$n$ and let $M\to\infty$ in~\eqref{eqs:master-two-sided-circle}, we get the limiting system
			\begin{equation}
				\label{eq:mastereq1dinf}
				\frac{d [S^n_{\infty}]}{dt}=-(np+q)[S^n_{\infty}]+q[S^{n+1}_{\infty}],\qquad [S^n_{\infty}](0)=(1-I^0)^n,\qquad n \in \mathbb{N}.
			\end{equation}
			This does not immediately imply that $\lim_{M \to \infty} [S^n] =  [S^n_{\infty}]$.
			Indeed, this limit does not follow from the standard theorems on continuous dependence of solutions of ODEs on parameters, 
			because the number of ODEs in~\eqref{eqs:master-two-sided-circle} increases with~$M$, and becomes infinite in the limit, and because of the 
			presence of the unbounded factor~$n$ on the right-hand sides of~\eqref{eqs:master-two-sided-circle} and~\eqref{eq:mastereq1dinf}. In Lemma~\ref{lem:circle-convergence-monotone} below, however,
			we will rigorously prove that for any $n \in \mathbb{N}$,
			\begin{equation}
				\label{eq:S^M->S-circle-hom-monotone}
				\lim_{M \to \infty} [S^n](t;M)  =  [S^n_{\infty}](t).
			\end{equation}
			Therefore,  
			we can proceed to solve the infinite system~\eqref{eq:mastereq1dinf}.	To do that, we note that the ansatz 
			\begin{equation}
				\label{eq:substitution-S^n-circle}
				[S^n_{\infty}]= \left((1-I^0)e^{-\int_0^t p(\tau)}\right)^{n-1}\, [S^{\rm 1D}]
			\end{equation}
			reduces the infinite system~\eqref{eq:mastereq1dinf} to the single ODE
			\begin{equation}
				\label{eq:S^1D_der}
				\frac{d [S^{\rm 1D}]}{dt}=-\Big(p+q(1-(1-I^0)e^{-\int_0^t p(\tau)})\Big)[S^{\rm 1D}], \qquad [S^{\rm 1D}](0)=1-I^0.
			\end{equation}
			Since $f^{\rm circle} = 1-[S^1]$ and $f^{\rm 1D} = 1-[S^{\rm 1D}]$,  
			the result follows from  relation~\eqref{eq:S^M->S-circle-hom-monotone} with $n=1$. % and~\eqref{eq:S^1D}. 
			The monotonicity in~$M$ follows from Lemma~\ref{lem:f_circle_increases_with_M}.
			% Hence
			%\begin{equation*}
			%	[S_{\infty}]= e^{-(p+q)t+q\frac{1-e^{-pt}}{p}}.
			%\end{equation*}
			%Since $\lim\limits_{M\to\infty} f^{\rm circle}(t;p,q,M)=1-[S_{\infty}]$,
			% we get~\eqref{eqs:f^1D}. 
			%When~$M=\infty$, eqs.~\eqref{eq:master-one-sided-circle} are given by  
			%the infinite system of ODEs
			%\begin{equation*}
			%	%\label{1D_Left_Sk_M=infinity}
			%	\frac{d}{dt}{[S^n]}=-(np+q)[S^n]+q[S^{n+1}], \qquad [S^n](0) = 1  \qquad\qquad n=1, 2, \dots 
			%\end{equation*}
			%The substitution 
			%\begin{equation*}
			%	%	\label{1D_Left_Sk_by_S} 
			%	[S^n]=e^{-(k-1)pt}[S], \qquad n=1, 2,\ldots
			%\end{equation*}
			%reduces this infinite system into the single ODE
			%%%%%Indeed, substituting~\eqref{1D_Left_Sk_by_S} in~\eqref{1D_Left_Sk_M=infinity} yields
			%% \[
			%%\Big(-(k-1)p\Big)e^{-(k-1)pt}[S]+e^{-(k-1)pt}\frac{d}{dt}{[S]}=
			%%\Big(-kp-q\Big)e^{-(k-1)pt}[S]+qe^{-kpt}[S],
			%%\] or after some rearranging,
			%\begin{equation}
			%	\label{eq:S^1D_der}
			%	\frac{d}{dt}{[S]}=-\left(p+q\left(1-e^{-pt}\right)\right)[S], \qquad [S](0) = 1.
			%\end{equation}
			%
			%$$
			%f^{\rm 1D} = \lim_{M\to\infty}f^{\rm circle}(t;p,q,M) = 1-\lim_{M\to\infty}[S](t;p,q,M) = 
			%1-[S_{\infty}](t;p,q,M). 
			%$$
		\end{proof}
		
		The proof of Theorem~\ref{thm:f^circle->f^1D} makes use of %the following convergence result:
		\begin{lemma}
			\label{lem:circle-convergence-monotone}
			For any $n \in \mathbb{N}$ and $t \ge 0$, the solution $[S^n](t;M)$   of the master equations~\eqref{eqs:master-two-sided-circle}
			converges monotonically as $M \to \infty$ to the solution $[S^n_\infty](t)$  of equation~\eqref{eq:mastereq1dinf}.
		\end{lemma}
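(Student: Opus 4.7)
The plan is to mirror the argument used for the complete-network case in Lemma~\ref{lem:Steve-convergence-homog-monotone}; the circle system~\eqref{eqs:master-two-sided-circle} is actually slightly simpler, since the coefficient multiplying $[S^n]$ is $np+q$ with no $M$-dependence. First I would fix $n \in \mathbb{N}$, restrict attention to $M \ge n+1$, and integrate~\eqref{eq:1D_both_Sk} from $0$ to $t$. Together with the initial conditions~\eqref{eq:1D_both_Sk_ic} this gives the integral identity
\begin{equation*}
[S^n](t;M) - 1 = -\int_0^t \big(np(s)+q(s)\big)\,[S^n](s;M)\,ds + \int_0^t q(s)\,[S^{n+1}](s;M)\,ds,
\end{equation*}
which will be the vehicle for passing to the limit $M \to \infty$.

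Next I would establish a pointwise limit. By Lemma~\ref{lem:f_circle_increases_with_M}, specifically relation~\eqref{eq:[S^n]-monotone-in-M-circle}, the sequence $\{[S^n](t;M)\}_{M \ge n}$ is monotonically decreasing in~$M$ for each fixed $t$, and it is bounded below by~$0$ because $[S^n]$ is a probability. Hence for every $t \ge 0$ the monotone limit $[S^n_\infty](t) := \lim_{M\to\infty}[S^n](t;M)$ exists and lies in $[0,1]$, which already gives the monotone-convergence assertion of the lemma. To identify this limit with the solution of~\eqref{eq:mastereq1dinf}, I would pass to the limit inside the integral identity: the uniform bound $0 \le [S^n](s;M), [S^{n+1}](s;M) \le 1$ provides a constant dominating function, and since $p$ and $q$ are piecewise continuous on $[0,t]$, the dominated convergence theorem yields
\begin{equation*}
[S^n_\infty](t) - 1 = -\int_0^t \big(np(s)+q(s)\big)\,[S^n_\infty](s)\,ds + \int_0^t q(s)\,[S^{n+1}_\infty](s)\,ds.
\end{equation*}
Being a pointwise limit of measurable functions, $[S^n_\infty]$ is measurable; the integral representation then forces it to be continuous, and differentiating recovers~\eqref{eq:mastereq1dinf} with the correct initial value $(1-I^0)^n$.

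The main obstacle is the standard one for infinite systems of ODEs: classical continuous-dependence theorems do not apply, because the number of equations diverges as $M \to \infty$ and because the factor $n$ multiplying $p$ on the right-hand side of~\eqref{eq:1D_both_Sk} is unbounded. The monotonicity supplied by Lemma~\ref{lem:f_circle_increases_with_M} is exactly what bypasses this difficulty, reducing the convergence to elementary monotone/dominated convergence applied one $n$ at a time. A small bookkeeping point to check is that the distinguished equation~\eqref{eq:1D_both_SM} for $[S^M]$ requires no separate treatment: for any fixed $n$, one eventually has $M > n$ and only equations of the type~\eqref{eq:1D_both_Sk} enter the argument.
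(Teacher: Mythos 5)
Your argument is correct and follows essentially the same route as the paper's proof: integrate the ODE, use the monotonicity from Lemma~\ref{lem:f_circle_increases_with_M} together with nonnegativity to extract a pointwise limit, pass to the limit via dominated convergence, and recover the limiting ODE from the resulting integral identity. The only slip is the constant on the left-hand side of your integral identity, which should be $(1-I^0)^n$ rather than $1$ (these coincide only when $I^0=0$); since your closing remark states the correct initial value, this is a typo rather than a gap.
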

		\begin{proof}
			The proof is nearly identical to that of Lemma~\ref{lem:Steve-convergence-homog-monotone}.
			Let $n \in \mathbb{N}$ and $M \ge n$. 
			Integrating the ODE~\eqref{eq:1D_both_Sk} for~$[S^n](t;M)$ 
			from zero to $t$ and using the initial condition~\eqref{eq:1D_both_Sk_ic} 
			gives 
			\begin{equation}
				\label{eq:[S^n](t;M)-integral form-circle}
				[S^n](t;M)-(1-I^0)^n = -
				\int_0^t \left(np(s)+q(s)\right)[S^n](s;M) \,ds +\int_0^t q(s)[S^{n+1}](s;M) \,ds.
			\end{equation}
			Let us consider the limit of~\eqref{eq:[S^n](t;M)-integral form-circle} as $M \to \infty$. 
			Since $[S^n](t;M)$ is monotonically decreasing in~$M$, see~\eqref{eq:[S^n]-monotone-in-M-circle}, and 
			since  $[S^n] \ge  0$ as a probability,
			this implies that $[S^n](t;M)$ converges pointwise as $M \to \infty$ 
			to some limit~$[S^n_\infty](t)$.
			Therefore, as $M \to \infty$,
			the left-hand side of~\eqref{eq:[S^n](t;M)-integral form-circle} converges to 
			$[S^n_\infty]-(1-I^0)^n$. 
			In addition, since $[S^n]$ is a probability, $0 \le [S^n](t;M) \le 1$, and so, 
			by the dominated convergence theorem, the integrals of~$[S^n]$ and~$[S^{n+1}]$ on the 
			right-hand side of~\eqref{eq:[S^n](t;M)-integral form-circle}
			converge to the integrals of the limits. 
			Hence, the limit of~\eqref{eq:[S^n](t;M)-integral form-circle} as $M \to \infty$ is
			\begin{equation}
				\label{eq:[S^n_infty](t)-integral form-circle}
				[S^n_\infty](t)-(1-I^0)^n = -
				\int_0^t \left(np(s)+q(s)\right)[S^n_\infty](s) \,ds + \int_0^t q(s)[S^{n+1}_\infty](s) \,ds.
			\end{equation}
			
			Since a pointwise limit of a sequence of 
			measurable functions is also measurable,  $[S^n_\infty](t)$ is  measurable. 
			Hence, it follows from~\eqref{eq:[S^n_infty](t)-integral form-circle} that it is 
			continuous, hence differentiable. Differentiating~\eqref{eq:[S^n_infty](t)-integral form-circle},
			we conclude that  $[S^n_\infty]$ satisfies the limit ODE~\eqref{eq:mastereq1dinf}.
		\end{proof}
	\section{Heterogeneous complete networks with two groups}
	\label{sec:groups}

Consider now the Bass/SI~model on a heterogeneous complete network with two groups, each of size $M$.
The parameters for node $k_n$ in group $k$ are\,\footnote{In this case, there is no normalization for which the maximal internal influence $\sum_{m \in \cal M}q_{m,k_n}(t)$ is independent of the network size~$M$.}  
\begin{equation*}
		I_{k_n}^0 \equiv I_k^0, \quad p_{k_n}(t)\equiv p_k(t),\quad q_{m,k_n}(t)\equiv \frac{q_k(t)}{2M},\qquad m\in {\cal M}, \quad n=1,\dots, M, \qquad k=1,2,
\end{equation*}
where ${\cal M}:=\{1, \dots, 2M\}$. 
Hence, the adoption rate of a node in group $k$ is
\begin{subequations}
		\label{eq:discrete_group_model}
		\begin{equation}
			\lambda_{k}(t)=p_{k}(t)+\frac{q_k(t)}{2M}\sum_{i=1}^{2M}X_i(t), \qquad k=1,2.
		\end{equation}
		The model parameters satisfy 
		%\begin{subequations}
		%	\label{eqs:assumptions-Bass/SI}
		\begin{equation}
			0 \le I_k^0<1, \quad 	q_k(t)>0, \quad  p_k(t) \ge 0, \quad t>0, \qquad k=1,2.
		\end{equation}
		and 
		\begin{equation}
		I_k^0+p_k(t)>0,  \qquad t>0, \qquad k=1,2.
		\end{equation}
\end{subequations}
Furthermore, we assume that $p_k(t)$ and $q_k(t)$ are piecewise continuous.
We denote the expected adoption/infection level in the Bass/{\rm SI} model~{\rm (\ref{eqs:Bass-SI-models-ME},\ref{eq:discrete_group_model})} on a complete network with two groups by~$f^{\rm 2-groups}$.
Specifically, we denote the expected adoption level in the Bass model
by $f^{\rm 2-groups}_{\rm Bass}:=f^{\rm 2-groups}(\cdot,I_1^0=I_2^0=0)$, 
and the expected infection level in the SI~model
by~$f^{\rm 2-groups}_{\rm SI}:=f^{\rm 2-groups}(\cdot,p_1=p_2=0)$.

\subsection{Monotone convergence of~$f^{\rm 2-groups}$}
We can show the monotone convergence of the Bass/SI model~{\rm (\ref{eqs:Bass-SI-models-ME},\ref{eq:discrete_group_model})} on a complete network with two groups:
	\begin{lemma}
	\label{lem:f_complete_monotone_in_M_2groups}
	Let $t>0$. % and let~\eqref{eqs:assumptions-Bass/SI} hold. 
	Then $f^{\rm 2-groups}(t;p_1(t),p_2(t),q_1(t),q_2(t),I_1^0,I_2^0,2M)$ is monotonically increasing in~$M$.
\end{lemma}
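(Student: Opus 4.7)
The plan is to mirror the proof of Lemma~\ref{lem:f_complete_monotone_in_M}, but with a two-dimensional reverse induction to accommodate the two groups. First I would exploit the symmetry within each group: since the group-$k$ nodes are interchangeable, $[S_\Omega]$ depends only on the counts $n_k:=|\Omega\cap\text{group }k|$. Writing $[S^{n_1,n_2}](t;M)$ for the common value and substituting into~\eqref{eqs:master-eqs-general}, one obtains a triangular system in which $[S^{n_1,n_2}]$ couples only to $[S^{n_1+1,n_2}]$ and $[S^{n_1,n_2+1}]$ via coefficients $A^M=(2M-n_1-n_2)\alpha/(2M)$, $B^M=(M-n_1)\alpha/(2M)$, $C^M=(M-n_2)\alpha/(2M)$, where $\alpha:=n_1 q_1+n_2 q_2$ and $A^M=B^M+C^M$, with initial data $[S^{n_1,n_2}](0)=(1-I_1^0)^{n_1}(1-I_2^0)^{n_2}$ independent of $M$. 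Setting $y_{n_1,n_2}(t):=[S^{n_1,n_2}](t;M)-[S^{n_1,n_2}](t;M+1)$ for $0\le n_1,n_2\le M$ with $(n_1,n_2)\neq(0,0)$, so that $y_{n_1,n_2}(0)=0$, I would prove $y_{n_1,n_2}(t)>0$ for $t>0$ by reverse induction on $n:=n_1+n_2$, descending from $n=2M$ to $n=1$.

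The base case $(M,M)$ is handled directly, as in Lemma~\ref{lem:f_complete_monotone_in_M}: on size~$2M$ the equation collapses to $\tfrac{d}{dt}[S^{M,M}]=-M(p_1+p_2)[S^{M,M}]$, while on size~$2(M+1)$ two extra terms from the additional group-1 and group-2 nodes appear, so the difference satisfies a linear ODE with forcing $\tfrac{M(q_1+q_2)}{2(M+1)}\bigl([IS^{M,M}]+[SI^{M,M}]\bigr)(t;M+1)>0$, and Lemma~\ref{lem:dy_dt+cy>0} gives $y_{M,M}(t)>0$. For the inductive step with both $n_1<M$ and $n_2<M$ (interior), subtracting the two reduced equations and using the coefficient identities $B^M-B^{M+1}=-\alpha n_1/(2M(M+1))$ and $C^M-C^{M+1}=-\alpha n_2/(2M(M+1))$ produces
\[
\tfrac{dy_{n_1,n_2}}{dt}+(n_1 p_1+n_2 p_2+A^{M+1})\,y_{n_1,n_2}=B^{M+1}\,y_{n_1+1,n_2}+C^{M+1}\,y_{n_1,n_2+1}+F_{n_1,n_2}(t),
\]
where $F_{n_1,n_2}=\tfrac{\alpha}{2M(M+1)}\bigl(n_1([S^{n_1,n_2}]-[S^{n_1+1,n_2}])+n_2([S^{n_1,n_2}]-[S^{n_1,n_2+1}])\bigr)(t;M)\ge 0$. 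Since $y_{n_1+1,n_2},y_{n_1,n_2+1}>0$ by the inductive hypothesis and $B^{M+1},C^{M+1}>0$ in the interior, the right-hand side is strictly positive and Lemma~\ref{lem:dy_dt+cy>0} closes the interior step.

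The main obstacle, which is the new feature compared to the complete and circular settings, is the boundary $n_1=M$ with $n_2<M$ (and symmetrically $n_2=M$ with $n_1<M$): on size~$2M$ the coefficient $B^M$ vanishes and no $[S^{M+1,n_2}]$ term is present, whereas on size~$2(M+1)$ this term appears with an inconvenient sign in the raw ODE for $y_{M,n_2}$. I would resolve this by invoking the identity
\[
[S^{M+1,n_2}](t;M+1)=[S^{M,n_2}](t;M+1)-[IS^{M,n_2}](t;M+1),
\]
where $[IS^{M,n_2}]$ denotes the probability that the $M$ specified group-1 and $n_2$ specified group-2 nodes are nonadopters while the extra group-1 node is adopter. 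Substituting and using $A^{M+1}=B^{M+1}+C^{M+1}$ absorbs the $B^{M+1}[S^{M,n_2}](t;M+1)$ contribution into the damping coefficient of the ODE for $y_{M,n_2}$, and leaves behind the additional forcing $B^{M+1}[IS^{M,n_2}](t;M+1)$, which is strictly positive for $t>0$ under the standing assumptions on~\eqref{eq:discrete_group_model}. Combined with $y_{M,n_2+1}>0$ from the inductive hypothesis (which applies since $(M,n_2+1)$ lies at level $n+1$ with $n_2+1\le M$), Lemma~\ref{lem:dy_dt+cy>0} applies and closes this case. Once $y_{n_1,n_2}(t)>0$ has been established for every admissible $(n_1,n_2)$, the lemma follows from
\[
f^{\rm 2-groups}(t;M+1)-f^{\rm 2-groups}(t;M)=\tfrac12\bigl(y_{1,0}(t)+y_{0,1}(t)\bigr)>0.
\]
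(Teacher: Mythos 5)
Your proposal is correct and follows essentially the same strategy as the paper's proof: reverse induction on $n=k_1+k_2$ descending from the base case $(M,M)$, with the differences $y_{k_1,k_2}$ satisfying forced linear ODEs whose forcing terms are sums of nonnegative ``exactly-one-extra-adopter'' probabilities, so that Lemma~\ref{lem:dy_dt+cy>0} closes each step. The one place where you go beyond the paper's write-up is the boundary cases $k_1=M$, $k_2<M$ (and symmetrically): the paper applies its generic formula for $z_{k_1,k_2}$ there, which formally involves the undefined quantity $[S^{M+1,k_2}](t;2M)$ and the out-of-range difference $y_{M+1,k_2}$, whereas you correctly observe that the extra term $[S^{M+1,k_2}](t;2(M+1))$ appearing only in the larger system must be split as $[S^{M,k_2}](t;2(M+1))-[I^{1,0}S^{M,k_2}](t;2(M+1))$, with the first piece absorbed into the damping coefficient (using $A^{M+1}=B^{M+1}+C^{M+1}$) and the second retained as positive forcing. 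This is a genuine detail that the paper's generic formula glosses over, and your treatment of it is right; the rest of your argument, including the coefficient identities and the final identification $f^{\rm 2-groups}(t;M+1)-f^{\rm 2-groups}(t;M)=\tfrac12(y_{1,0}+y_{0,1})$, matches the paper.
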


Using the monotonicity in $M$, we can prove the convergence of $f^{\rm 2-groups}$ as $M\to\infty$ and compute its limit:

	\begin{theorem}
	\label{thm:2groups_conv}
	Consider the Bass/SI model~{\rm (\ref{eqs:Bass-SI-models-ME},\ref{eq:discrete_group_model})} on a complete network with two groups. 
	% and let~$f^{\rm circle}$ denote the expected adoption level in the Bass  models~\eqref{eqs:one_sided_circle_all} and~\eqref{eqs:Bass-model-homog-circle} on the homogeneous one-sided and 	two-sided circles. 
	Then
	\begin{subequations}
		\label{eqs:f^2groups}
		\begin{equation}
			\label{eq:f_2groups-M->infinity}
			\lim_{M\to\infty}f^{\rm 2-groups}(t;p_1(t),p_2(t),q_1(t),q_2(t),I_1^0,I_2^0,2M) = f^{\rm 2-groups}_{\infty}(t),
		\end{equation}
		%uniformly in~$t$, 
		where $f^{\rm 2-groups}_{\infty}:=f_1+f_2$, and 
		$f_1,f_2$ are the solutions of
		\begin{equation}
			\label{eq:master_inf_2groups}
			\begin{aligned}
				\frac{df_1}{dt}&=\left(\frac{1}{2}-f_1\right)\Bigg(p_1(t)+q_1(t)\left(f_1+f_2\right)\Bigg), \qquad f_1(0)=\frac{I_1^0}{2},\\
				\frac{df_2}{dt}&=\left(\frac{1}{2}-f_2\right)\Bigg(p_2(t)+q_2(t)\left(f_1+f_2\right)\Bigg), \qquad f_2(0)=\frac{I_2^0}{2}.
			\end{aligned}
		\end{equation}
	\end{subequations}
Here $0\leq f_i\leq \frac{1}{2}$ denotes the fraction of adopters from group $i$ in the population. 
\end{theorem}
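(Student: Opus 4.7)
The plan is to mirror the proofs of Theorems~\ref{thm:Niu(t)} and~\ref{thm:f^circle->f^1D}, lifting the argument from a single index~$n$ to a pair of indices $(n_1,n_2)$. First, by the within-group symmetry of the network~\eqref{eq:discrete_group_model}, the probability $[S_\Omega]$ depends only on $n_k := |\Omega \cap \{\text{group }k\}|$, $k=1,2$. Denoting this common value by $[S^{n_1,n_2}]$ and substituting~\eqref{eq:discrete_group_model} into the general master equations~\eqref{eqs:master-eqs-general} gives a reduced system indexed by $(n_1,n_2) \in \{0,\dots,M\}^2 \setminus \{(0,0)\}$:
\begin{equation*}
\frac{d[S^{n_1,n_2}]}{dt} = -\Big(n_1 p_1 + n_2 p_2 + \tfrac{(2M-n_1-n_2)(n_1 q_1 + n_2 q_2)}{2M}\Big)[S^{n_1,n_2}] + \tfrac{n_1 q_1 + n_2 q_2}{2M}\Big((M-n_1)[S^{n_1+1,n_2}] + (M-n_2)[S^{n_1,n_2+1}]\Big),
\end{equation*}
with product initial data $[S^{n_1,n_2}](0) = (1-I_1^0)^{n_1}(1-I_2^0)^{n_2}$.

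Next, I would formally let $M\to\infty$, in which the factors $(2M-n_1-n_2)/(2M)$ and $(M-n_k)/(2M)$ tend to~$1$ and~$\tfrac12$ respectively, yielding the infinite limit system
\begin{equation*}
\frac{d[S^{n_1,n_2}_\infty]}{dt} = -(n_1 p_1 + n_2 p_2 + n_1 q_1 + n_2 q_2)[S^{n_1,n_2}_\infty] + \tfrac{n_1 q_1 + n_2 q_2}{2}\Big([S^{n_1+1,n_2}_\infty] + [S^{n_1,n_2+1}_\infty]\Big).
\end{equation*}
The rigorous justification of this pointwise limit would follow the template of Lemma~\ref{lem:Steve-convergence-homog-monotone}: the monotonicity of each $[S^{n_1,n_2}](t;M)$ in~$M$ (supplied by the proof of Lemma~\ref{lem:f_complete_monotone_in_M_2groups}) gives pointwise convergence to some limit, and because $[S^{n_1,n_2}] \in [0,1]$ the dominated convergence theorem lets me pass the limit through the integrated form of the reduced master equation; differentiating shows that the limit obeys the displayed limit ODE.

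To solve the infinite system I would use the product ansatz $[S^{n_1,n_2}_\infty](t) = [S_1](t)^{n_1} [S_2](t)^{n_2}$, consistent with the initial data. Substituting and dividing by $[S_1]^{n_1-1}[S_2]^{n_2-1}$ yields a linear combination with coefficients $n_1$ and $n_2$, which must vanish independently, forcing
\begin{equation*}
\frac{d[S_k]}{dt} = -[S_k]\Big(p_k + q_k\big(1 - \tfrac{[S_1]+[S_2]}{2}\big)\Big), \qquad [S_k](0) = 1-I_k^0, \qquad k=1,2.
\end{equation*}

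Finally, the change of variables $f_k := (1-[S_k])/2$ converts the pair above into~\eqref{eq:master_inf_2groups}, with $f_k(0) = I_k^0/2$. Since every node in group~$k$ has adoption probability $1-[S_k]$, the expected adoption level satisfies $f^{\rm 2-groups} = \tfrac12(1-[S_1]) + \tfrac12(1-[S_2]) = f_1 + f_2$, which gives~\eqref{eq:f_2groups-M->infinity}. I expect the main obstacle to be the rigorous $M\to\infty$ step: the doubly-indexed reduced system requires monotonicity of every $[S^{n_1,n_2}]$ in~$M$ (not merely of $[S^{1,0}]$ and $[S^{0,1}]$), so that the dominated-convergence argument closes on the two right-hand-side terms $[S^{n_1+1,n_2}]$ and $[S^{n_1,n_2+1}]$; establishing this full monotonicity, which should be handled inside the proof of Lemma~\ref{lem:f_complete_monotone_in_M_2groups} by a reverse induction on $(n_1,n_2)$ analogous to~\eqref{eq:y_n+1(t)>0-complete}, is the technical heart of the argument.
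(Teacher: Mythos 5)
Your proposal matches the paper's proof essentially step for step: the same doubly-indexed reduced master equations, the same formal $M\to\infty$ limit justified via the monotonicity of every $[S^{k_1,k_2}](t;2M)$ in $M$ plus dominated convergence on the integrated equations, the same product ansatz $[S^{k_1,k_2}_\infty]=[S^{1,0}_\infty]^{k_1}[S^{0,1}_\infty]^{k_2}$, and the same change of variables $f_k=\tfrac12(1-[S_k])$. You have also correctly identified that the full two-parameter reverse induction for the monotonicity (the paper's Lemma~\ref{lem:f_complete_monotone_in_M_2groups}) is the technical core, exactly as the paper organizes it.
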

From Lemma~\ref{lem:f_complete_monotone_in_M_2groups} and Theorem~\ref{thm:2groups_conv} we have

	\begin{corollary}
	\label{cor:monotone-convergence-2groups}
	The convergence of~$f^{\rm 2-groups}$ to~$f^{\rm 2-groups}_\infty$
	is monotone in~$M$. 
\end{corollary}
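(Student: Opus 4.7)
The plan is to derive this corollary as a one-line consequence of the two preceding results, following exactly the same template as Corollary~\ref{cor:monotone-convergence-f_complete_to_Bass} for the complete network and Corollary~\ref{cor:monotone-convergence-circle-to-f_1D} for the circle. Specifically, I would first invoke Lemma~\ref{lem:f_complete_monotone_in_M_2groups}, which for each fixed $t>0$ guarantees that the sequence
$$
M \;\longmapsto\; f^{\rm 2-groups}\!\left(t; p_1(t), p_2(t), q_1(t), q_2(t), I_1^0, I_2^0, 2M\right)
$$
is monotonically increasing in~$M$, and then invoke Theorem~\ref{thm:2groups_conv}, which identifies its pointwise limit as $f^{\rm 2-groups}_\infty(t)$. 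A monotonically increasing sequence that converges to a limit does so monotonically, which is exactly the statement of the corollary.

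No auxiliary estimates or intermediate constructions are required, so there is essentially no technical obstacle: the substantive work has already been absorbed into the monotonicity lemma and the convergence theorem. If desired, one can also record the strict companion bound $f^{\rm 2-groups}(t;\ldots,2M) < f^{\rm 2-groups}_\infty(t)$ for every finite $M$ and every $t>0$, paralleling the strict bounds in Corollaries~\ref{cor:f_complete<f_Bass} and~\ref{cor:f_circle<f_1D}; this uses \emph{strict} monotonicity, which in the analogous complete-network and circle proofs is delivered by Lemma~\ref{lem:dy_dt+cy>0} applied to a differential inequality with a strictly positive source term, and an identical application should be available inside the proof of Lemma~\ref{lem:f_complete_monotone_in_M_2groups}.

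The only thing worth double-checking, to make sure nothing is lost in the passage from the one-group setting to two groups, is that the per-group quantities in~\eqref{eq:master_inf_2groups} are normalized so that $f_1+f_2$ is indeed the pointwise limit of the whole-population fraction $f^{\rm 2-groups}$; the initial conditions $f_i(0)=I_i^0/2$ and the a priori bound $0\le f_i\le \tfrac12$ in the statement of Theorem~\ref{thm:2groups_conv} confirm this convention, so the corollary really is just a direct concatenation of the two previously stated results.
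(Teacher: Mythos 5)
Your proposal is correct and matches the paper's proof exactly: the corollary is obtained as an immediate consequence of Lemma~\ref{lem:f_complete_monotone_in_M_2groups} (monotonicity in $M$) combined with Theorem~\ref{thm:2groups_conv} (identification of the pointwise limit). The additional remarks about strict bounds and normalization are fine but not needed.
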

\begin{proof}
	This follows from Lemma~\ref{lem:f_complete_monotone_in_M_2groups} and Theorem~\ref{thm:2groups_conv}.
\end{proof}

The monotone convergence of $f^{\rm 2-groups}$ to $f^{\rm 2-groups}_\infty$ is illustrated in Figure~\ref{fig:f_groups_converge2_f_bass}.

\begin{figure}[!h]
	\begin{center}
		\scalebox{0.6}{\includegraphics{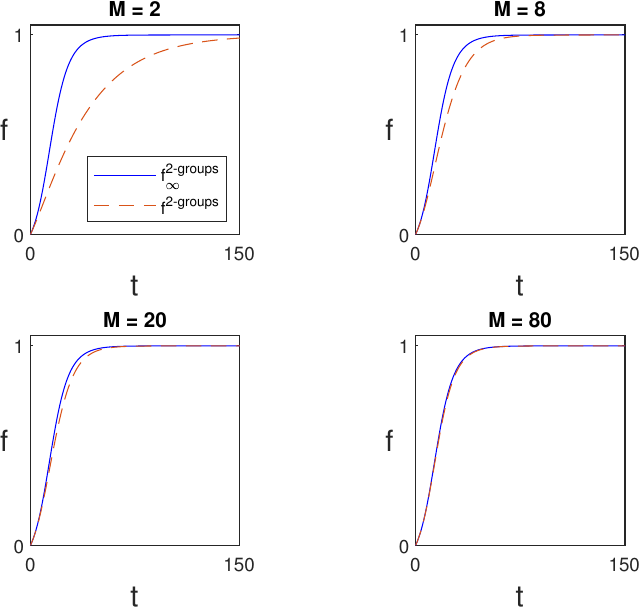}}
		\caption{Monotone convergence of~$f^{\rm 2-groups}$ (dashes) to~$f^{\rm 2-groups}_{\infty}$ (solid).  Here~$\frac{q_1}{p_1} =\frac{q_2}{p_2}=10$, $p_2=2p_1 $, $I^0=  0$, and $M=2,8,20,80$.}
		\label{fig:f_groups_converge2_f_bass}
	\end{center}
\end{figure}

\subsection{Proof of Lemma~\ref{lem:f_complete_monotone_in_M_2groups} and Theorem~\ref{thm:2groups_conv}}
	
Let 
$$
A_M:=   \{0,\dots,M\}^2 \setminus (0,0),
$$
	and let $[S^{k_1,k_2}](t;2M)$ denote the probability that $k_1$ nodes in group 1 and $k_2$ nodes in group 2 are non-adopters at time~$t$.
		Then
	$$
	f^{\rm 2-groups}= %(t;p_1(t),p_2(t),q_1(t),q_2(t),I_1^0,I_2^0,M)
	f_1+f_2=1-\frac{1}{2}\left([S^{1,0}]+[S^{0,1}]\right).
	$$
	
			\begin{lemma}[\cite{DCDS-23}]
		\label{lem:master_2groups}
		The reduced master equations for 
		the Bass/SI model~{\rm (\ref{eqs:Bass-SI-models-ME},\ref{eq:discrete_group_model})} on a complete network with two groups are
		%for~$\{[S^n](t;p,q^{\rm R},q^{\rm L},M)\}_{k \in \cal M}$ are
		\begin{subequations}
			\label{eqs:master-2groups}
				\begin{equation}
				\label{eq:master_2groups}
				\begin{aligned}
					\frac{d[S^{k_1,k_2}]}{dt}&=-\left(k_1p_1(t)+k_2p_2(t)+\frac{2M-k_1-k_2}{2M}\left(k_1q_1(t)+k_2q_2(t)\right)\right)[S^{k_1,k_2}]\\&\quad+\left(\frac{M-k_1}{2M}[S^{k_1+1,k_2}]+\frac{M-k_2}{2M}[S^{k_1,k_2+1}]\right)\left(k_1q_1(t)+k_2q_2(t)\right), \qquad (k_1,k_2) \in A_M,
				\end{aligned}
			\end{equation}
			subject to the initial conditions
			\begin{equation}
				\label{eq:initial_2groups}
			[S^{k_1,k_2}](0)=(1-I_1^0)^{k_1}(1-I_2^0)^{k_2}, \qquad\qquad (k_1,k_2) \in A_M.
			\end{equation}
		\end{subequations}
	\end{lemma}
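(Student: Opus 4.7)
The plan is to derive \eqref{eqs:master-2groups} directly from Theorem~\ref{thm:master-eqs-general} by exploiting the symmetry of the two-group complete network. Since all nodes within group $k$ share the same external rate $p_k(t)$, initial probability $I_k^0$, and incoming edge weight $q_k(t)/(2M)$, the probability $[S_\Omega](t)$ depends on $\Omega$ only through the pair $(k_1,k_2) := (|\Omega \cap \text{group 1}|,\,|\Omega \cap \text{group 2}|)$. This reduces the $2^{2M}-1$ equations of Theorem~\ref{thm:master-eqs-general} to one equation per $(k_1, k_2) \in A_M$, and justifies the notation $[S^{k_1,k_2}] := [S_\Omega]$. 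The only point deserving care here is that $[S^{k_1,k_2}]$ is well-defined, which follows because the system~\eqref{eqs:master-eqs-general} and the initial data~\eqref{eq:master-eqs-genera-icl} are invariant under arbitrary permutations of nodes within a group, and this invariance propagates to~$[S_\Omega](t)$ for all $t \ge 0$.

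Next, I would substitute the parameters of~\eqref{eq:discrete_group_model} into the master equation~\eqref{eq:master-eqs-general} for a representative $\Omega$ with $(k_1,k_2)$ as above. The external-influence sum is immediately $p_\Omega(t) = k_1 p_1(t) + k_2 p_2(t)$. Since $q_{\ell,m}(t)$ depends only on the group of the recipient $m$ and not on the sender $\ell$, summing over $m \in \Omega$ yields
\begin{equation*}
q_{\ell,\Omega}(t) \;=\; \frac{k_1 q_1(t) + k_2 q_2(t)}{2M},
\end{equation*}
independent of $\ell \in \Omega^{\rm c}$. Multiplying by $|\Omega^{\rm c}| = 2M - k_1 - k_2$ gives precisely the loss-term coefficient $\frac{2M-k_1-k_2}{2M}(k_1 q_1 + k_2 q_2)$ appearing in~\eqref{eq:master_2groups}.

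For the gain term $\sum_{\ell \in \Omega^{\rm c}} q_{\ell,\Omega}\,[S_{\Omega,\ell}]$, I would split the sum according to the group of $\ell$. If $\ell$ lies in group~1 with $\ell \notin \Omega$, then $[S_{\Omega,\ell}] = [S^{k_1+1,k_2}]$ and there are $M-k_1$ such~$\ell$; analogously, each $\ell$ in group~2 outside~$\Omega$ contributes $[S^{k_1,k_2+1}]$ with multiplicity $M-k_2$. Factoring out the common value of $q_{\ell,\Omega}$ computed above produces exactly the gain expression in~\eqref{eq:master_2groups}. The boundary cases $k_1 = M$ or $k_2 = M$ are handled automatically by the factors $M-k_1$ and $M-k_2$, with no need for separate treatment.

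The initial condition~\eqref{eq:initial_2groups} is then read off from~\eqref{eq:master-eqs-genera-icl} and the independence assumption~\eqref{eq:p:initial_cond_uncor-two_sided_line}, giving $[S^0_\Omega] = \prod_{m \in \Omega}(1-I_m^0) = (1-I_1^0)^{k_1}(1-I_2^0)^{k_2}$. Overall, there is no substantive obstacle: once the symmetry reduction is justified, the derivation is a bookkeeping exercise consisting of carefully counting the contributions of group-1 and group-2 nodes in the loss and gain sums.
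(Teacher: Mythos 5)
Your derivation is correct and is exactly the argument the paper intends: the lemma is quoted from \cite{DCDS-23}, and the paper's proof of the analogous homogeneous-complete-network result (Lemma~\ref{lem:master-eq-homog-complete}) is precisely this substitution of the network parameters into the general master equations~\eqref{eqs:master-eqs-general} together with the permutation-symmetry reduction. Your bookkeeping of the loss and gain terms, the boundary cases $k_i=M$, and the initial condition all check out.
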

	
	\subsubsection{Monotonicity of $[S^{k_1,k_2}](t;2M)$}

	\begin{proof}[Proof of Lemma~\ref{lem:f_complete_monotone_in_M_2groups}]
		We proceed by reverse induction on $n = k_1+k_2$. 
		Let
		\begin{equation}
			\label{eq:y_n=S^n](t;M)-[S^n](t;M+1)_2groups}
			y_{k_1,k_2}(t):= [S^{k_1,k_2}](t;2M)-[S^{k_1,k_2}](t;2(M+1)),
			\qquad (k_1,k_2)\in A_M,
			% \qquad 
			%Z_n(t):= [S^n](t;M+1)-[S^{n+1}](t;M+1). 
		\end{equation}
		where $[S^{k_1,k_2}](t;2M)$ is the solution of the master equations~\eqref{eqs:master-2groups}. 
		We begin with the induction base $n=2M$. 
		Then
		%	see~\eqref{eq:master-homog-complete-[S^n I]},
		\begin{subequations}
			\label{eqs:d_dt_y_M-proof-monotonicity_2groups}
			\begin{equation}
				\frac{dy_{M,M}}{dt} +M\left(p_1(t)+p_2(t)\right) y_{M,M} =  z_{M,M}(t), \qquad 
				y_{M,M}(0) = 0,
			\end{equation}
			where 
			\begin{equation}
				\begin{aligned}
					z_{M,M} :=  \frac{M\left(q_1+q_2\right)}{2(M+1)}&\Bigg(\left([S^{M,M}](t;2(M+1))-[S^{M+1,M}](t;2(M+1))\right)\\&\quad+\left([S^{M,M}](t;2(M+1))-[S^{M,M+1}](t;2(M+1))\right)\Bigg).
				\end{aligned}	
			\end{equation}
		\end{subequations}
		%	where the strict inequality follows from~\eqref{eq:0<[S_Omega_cap_In]<1}.
		Since 
		$$
		[S^{M,M}](t;2(M+1))-[S^{M+1,M}](t;2(M+1))= [I^{1,0}S^{M,M}](t;2(M+1))>0,$$ and $$
		[S^{M,M}](t;2(M+1))-[S^{M,M+1}](t;2(M+1))= [I^{0,1}S^{M,M}](t;2(M+1))>0,$$ 
		where $[I^{j_1,j_2}S^{k_1,k_2}]$ denotes the probability that there are $j_m$ adopters and $k_m$ nonadopters in group~$m$ for $m=1,2$, then $z_{M,M}(t)>0$ for $t>0$. Therefore, applying Lemma~\ref{lem:dy_dt+cy>0} to~\eqref{eqs:d_dt_y_M-proof-monotonicity_2groups} shows that
		\begin{equation}
			\label{eq:y_M(t)>0-complete_2groups}
			y_{M,M}(t)>0, \qquad t>0.
		\end{equation}
		
		Let $ n\in \{1,\dots,2M-1\}$. Then
		$$
		\begin{aligned}
			\frac{d[S^{k_1,k_2}]}{dt}(t;2M)=&-\left(k_1p_1+k_2p_2+\frac{2M-k_1-k_2}{2M}\left(k_1q_1+k_2q_2\right)\right)[S^{k_1,k_2}](t;2M)\\& +\left(\frac{M-k_1}{2M}[S^{k_1+1,k_2}](t;2M)+\frac{M-k_2}{2M}[S^{k_1,k_2+1}](t;2M)\right)\left(k_1q_1+k_2q_2\right), 	\end{aligned}
		$$
		subject to
		$$
		[S^{k_1,k_2}](t;2M)(0)=(1-I_1^0)^{k_1}(1-I_2^0)^{k_2}.
		$$
		Similarly,
		$$
		\begin{aligned}
			&\frac{d[S^{k_1,k_2}]}{dt}(t;2(M+1))=\\&\qquad-\left(k_1p_1+k_2p_2+\frac{2M+2-k_1-k_2}{2M+2}\left(k_1q_1+k_2q_2\right)\right)[S^{k_1,k_2}](t;2(M+1))\\ &\qquad+\left(\frac{M+1-k_1}{2M+2}[S^{k_1+1,k_2}](t;2(M+1))+\frac{M+1-k_2}{2M+2}[S^{k_1,k_2+1}](t;2(M+1))\right)\left(k_1q_1+k_2q_2\right),
		\end{aligned}
	$$
			subject to
		$$
	[S^{k_1,k_2}](t;2(M+1))(0)=(1-I_1^0)^{k_1}(1-I_2^0)^{k_2}.
		$$
		Taking the difference of these two equations gives
		\begin{subequations}
			\label{eqs:d_dt_y_n-proof-monotonicity_2groups}
			\begin{equation}
				\begin{aligned}
					\frac{d y_{k_1,k_2}}{dt} &+\left(k_1p_1+k_2p_2+\frac{2M+2-k_1-k_2}{2M+2}\left(k_1q_1+k_2q_2\right)\right) y_{k_1,k_2}\\ &= \left(k_1q_1+k_2q_2\right)\left(\frac{M+1-k_1}{2M+2}y_{k_1+1,k_2}+\frac{M+1-k_2}{2M+2}y_{k_1,k_2+1}\right)+ z_{k_1,k_2}(t), \quad 
					y_{k_1,k_2}(0) = 0,
				\end{aligned}
			\end{equation}
			where
			\begin{equation}
				\begin{aligned}
					z_{k_1,k_2} = \left(k_1q_1+k_2q_2\right)\Bigg(&\frac{-k_1}{M(2M+2)}\left(-[S^{k_1,k_2}](t;2M)+[S^{k_1+1,k_2}](t;2M)\right)\\&\qquad+\frac{-k_2}{M(2M+2)}\left(-[S^{k_1,k_2}](t;2M)+[S^{k_1,k_2+1}](t;2M)\right)\Bigg).
				\end{aligned}
			\end{equation}
		\end{subequations}
		Since
		$$
		\frac{-k_1}{M(2M+2)}\leq 0,\qquad \frac{-k_2}{M(2M+2)}\leq 0,
		$$
		and
		$$
		-[S^{k_1,k_2}]+[S^{k_1+1,k_2}]=  -[I^{1,0}S^{k_1,k_2}]<0, \qquad -[S^{k_1,k_2}]+[S^{k_1,k_2+1}]=  -[I^{0,1}S^{k_1,k_2}]<0 \qquad t>0,
		$$
		%	where the strict inequality follows from~\eqref{eq:0<[S_Omega_cap_In]<1},
		we have that $ z_{k_1,k_2}\ge 0$ for $t>0$. 
		
		Therefore, applying Lemma~\ref{lem:dy_dt+cy>0} to~\eqref{eqs:d_dt_y_n-proof-monotonicity_2groups} shows that for any $(k_1,k_2) \in A_M \setminus (M,M)$,
		\begin{equation}
			\label{eq:y_n+1(t)>0-complete_2groups}
			\Bigl\{y_{k_1+1,k_2}(t)\quad \text{and}\quad y_{k_1,k_2+1}(t)>0, \quad t>0\Bigr\}  \qquad \Rightarrow \qquad y_{k_1,k_2}(t)>0, \quad t>0.
		\end{equation}
		
		From relations~\eqref{eq:y_M(t)>0-complete_2groups} and~\eqref{eq:y_n+1(t)>0-complete_2groups} 
		we get by reverse induction on~$n$ that
		\begin{equation}
			\label{eq:[S^n]-monotone-in-M_2groups}
			y_{k_1,k_2}(t)>0, \qquad (k_1,k_2)\in A_M,
		\end{equation}
		i.e., that $\{[S^{k_1,k_2}](t;2M)\}$ are monotonically decreasing in~$M$.
	\end{proof}
	
	\subsubsection{Convergence of~$f^{\rm 2-groups}$}

\begin{proof}[Proof of Theorem~\ref{thm:2groups_conv}]
		Let $M\to\infty$. Then the master equations~\eqref{eq:master_2groups} converge to
		\begin{subequations}
			\label{eqs:inf_sys_groups}
	\begin{equation}
		\frac{d}{dt}[S^{k_1,k_2}_{\infty}]=-\left(k_1\left(p_1+q_1\right)+k_2\left(p_2+q_2\right)\right)[S^{k_1,k_2}_{\infty}]+\frac{1}{2}\left(k_1q_1+k_2q_2\right)\left([S^{k_1+1,k_2}_{\infty}]+[S^{k_1,k_2+1}_{\infty}]\right),
	\end{equation}
with the initial condition
\begin{equation}
	[S_\infty^{k_1,k_2}](0)=(1-I_1^0)^{k_1}(1-I_2^0)^{k_2}, \qquad (k_1,k_2)\in A_M.
\end{equation}
		\end{subequations}
	 We will prove in lemma~\ref{lem:2groups_convergence-monotone} below that $\lim_{M \to \infty} [S^{k_1,k_2}] =  [S^{k_1,k_2}_{\infty}]$.
	Substituting the ansatz
	$$
	[S^{k_1,k_2}_{\infty}]=[S^{1,0}_{\infty}]^{k_1}[S^{0,1}_{\infty}]^{k_2}.
	$$
	in~\eqref{eqs:inf_sys_groups} gives
	\begin{equation}
		\label{eq:ode_inf_sum}
		\begin{aligned}
k_1[S^{0,1}_{\infty}]\frac{d}{dt}[S^{1,0}_{\infty}]+k_2[S^{1,0}_{\infty}]\frac{d}{dt}[S^{0,1}_{\infty}]=&-\left(k_1\left(p_1+q_1\right)+k_2\left(p_2+q_2\right)\right)[S^{1,0}_{\infty}][S^{0,1}_{\infty}]\\&+\frac{1}{2}\left(k_1q_1+k_2q_2\right)\left([S^{1,0}_{\infty}]^2[S^{0,1}_{\infty}]+[S^{1,0}_{\infty}][S^{0,1}_{\infty}]^2\right).
		\end{aligned}		
	\end{equation}
	Let $[S^{1,0}_{\infty}]$ and $[S^{0,1}_{\infty}]$ be the solutions of
	\begin{equation}
		\label{eq:inf_system}
		\begin{aligned}
			\frac{d}{dt}[S^{1,0}_{\infty}]&=-\left(p_1+q_1\right)[S^{1,0}]+\frac{q_1}{2}\left([S^{1,0}_{\infty}]^2+[S^{1,0}_{\infty}][S^{0,1}_{\infty}]\right),\qquad [S_\infty^{1,0}](0)=1-I_1^0,\\
			\frac{d}{dt}[S^{0,1}_{\infty}]&=-\left(p_2+q_2\right)[S^{0,1}_{\infty}]+\frac{q_2}{2}\left([S^{0,1}_{\infty}]^2+[S^{1,0}_{\infty}][S^{0,1}_{\infty}]\right),\qquad [S_\infty^{0,1}](0)=1-I_2^0.
		\end{aligned}
	\end{equation}
Then $[S^{1,0}_{\infty}]$ and $[S^{0,1}_{\infty}]$ satisfy~\eqref{eq:ode_inf_sum}. Substituting $f_1:=\frac{1}{2}\left(1-[S^{1,0}_{\infty}]\right)$ and $f_2:=\frac{1}{2}\left(1-[S^{0,1}_{\infty}]\right)$ in~\eqref{eq:inf_system} gives
	\begin{equation}
		\begin{aligned}
			\frac{df_1}{dt}&=\left(\frac{1}{2}-f_1\right)\Bigg(p_1+q_1\left(f_1+f_2\right)\Bigg), \qquad f_1(0)=\frac{I_1^0}{2},\\
			\frac{df_2}{dt}&=\left(\frac{1}{2}-f_2\right)\Bigg(p_2+q_2\left(f_1+f_2\right)\Bigg), \qquad f_2(0)=\frac{I_2^0}{2}.
		\end{aligned}
	\end{equation}
	Since $f^{\rm 2-groups}_{\infty}:=f_1+f_2$, the result follows.
\end{proof}
	\begin{lemma}
		\label{lem:2groups_convergence-monotone}
		For any $t\ge 0, (k_1,k_2)\in A_M$, the solution $[S^{k_1,k_2}](t;2M)$ of the master equations~\eqref{eq:master_2groups}
		converges monotonically as $M \to \infty$ to the solution $[S^{k_1,k_2}_\infty](t)$  of equation~\eqref{eq:master_inf_2groups}
	\end{lemma}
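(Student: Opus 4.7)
The plan is to follow the same strategy used in the proofs of Lemma~\ref{lem:Steve-convergence-homog-monotone} and Lemma~\ref{lem:circle-convergence-monotone}, adapted to the two-index setting. First, I would fix $(k_1,k_2) \in A_M$ and restrict attention to $M$ large enough that $k_1,k_2 \le M$, so that the quantities $[S^{k_1,k_2}](t;2M)$, $[S^{k_1+1,k_2}](t;2M)$, and $[S^{k_1,k_2+1}](t;2M)$ are all defined. By Lemma~\ref{lem:f_complete_monotone_in_M_2groups} (specifically relation~\eqref{eq:[S^n]-monotone-in-M_2groups}), the sequence $\{[S^{k_1,k_2}](t;2M)\}_M$ is monotonically decreasing in $M$, and since $[S^{k_1,k_2}](t;2M) \ge 0$ as a probability, it converges pointwise in $t$ as $M \to \infty$ to some nonnegative limit which I denote by $[S^{k_1,k_2}_\infty](t)$, and which automatically satisfies $0 \le [S^{k_1,k_2}_\infty](t) \le 1$.

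Next, I would convert the master equation~\eqref{eq:master_2groups} into integral form by integrating from $0$ to $t$ and using the initial condition~\eqref{eq:initial_2groups}, obtaining
\begin{equation*}
\begin{aligned}
[S^{k_1,k_2}](t;2M) - (1-I_1^0)^{k_1}(1-I_2^0)^{k_2}
&= -\int_0^t \Bigl(k_1 p_1 + k_2 p_2 + \tfrac{2M-k_1-k_2}{2M}(k_1 q_1+k_2 q_2)\Bigr)[S^{k_1,k_2}]\,ds \\
&\quad + \int_0^t (k_1 q_1+k_2 q_2)\Bigl(\tfrac{M-k_1}{2M}[S^{k_1+1,k_2}] + \tfrac{M-k_2}{2M}[S^{k_1,k_2+1}]\Bigr)\,ds.
\end{aligned}
\end{equation*}
Now I would take $M \to \infty$. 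The coefficients $\tfrac{2M-k_1-k_2}{2M}$, $\tfrac{M-k_1}{2M}$, and $\tfrac{M-k_2}{2M}$ converge to $1$, $\tfrac12$, and $\tfrac12$ respectively, while the integrands $[S^{k_1,k_2}]$, $[S^{k_1+1,k_2}]$, and $[S^{k_1,k_2+1}]$ converge pointwise by the first step and are uniformly bounded by $1$. Hence the dominated convergence theorem applies on the bounded interval $[0,t]$, yielding the integral form of the limit system~\eqref{eqs:inf_sys_groups} for $[S^{k_1,k_2}_\infty]$.

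Finally, since a pointwise limit of measurable functions is measurable, $[S^{k_1,k_2}_\infty](t)$ is measurable, and the integral identity then forces it to be continuous, hence differentiable. Differentiating the integral equation recovers the limit ODE~\eqref{eqs:inf_sys_groups}, which (under the ansatz and reduction used in the proof of Theorem~\ref{thm:2groups_conv}) is equivalent to~\eqref{eq:master_inf_2groups}. The main obstacle, as in the one-index proofs, is justifying the interchange of limit and integral despite the fact that the system~\eqref{eq:master_2groups} grows with $M$ and that the coefficients $k_1q_1+k_2q_2$ are unbounded across the family of ODEs. However, since the index pair $(k_1,k_2)$ is fixed while $M \to \infty$, these coefficients are fixed constants for each equation under consideration, so the dominated convergence argument goes through without complication; the only adaptation from Lemma~\ref{lem:Steve-convergence-homog-monotone} is the bookkeeping for two indices instead of one.
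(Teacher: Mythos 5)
Your proposal is correct and follows essentially the same route as the paper's own proof: monotonicity in $M$ plus nonnegativity gives pointwise convergence, the integral form of the master equation combined with the bound $0 \le [S^{k_1,k_2}] \le 1$ and dominated convergence yields the limiting integral equation, and measurability plus the integral identity gives continuity and differentiability. The only difference is cosmetic bookkeeping for the two indices, exactly as you note.
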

	\begin{proof}
		The proof is nearly identical to that of Lemma~\ref{lem:Steve-convergence-homog-monotone}.
		%Let $k_1,k_2=0, \dots, M$ and $k_1+k_2=1,\dots,2M$. 
		Integrating the ODE~\eqref{eq:master_2groups} for~$[S^{k_1,k_2}](t;2M)$ 
		from zero to $t$ gives 
		{\small
		\begin{equation}
			\label{eq:[S^n](t;M)-integral form-2groups}
			\begin{aligned}
				&[S^{k_1,k_2}](t;2M)-(1-I_1^0)^{k_1}(1-I_2^0)^{k_2} =
				\\&\qquad  -\int_0^t\left(k_1p_1(s)+k_2p_2(s)+\frac{2M-k_1-k_2}{2M}\left(k_1q_1(s)+k_2q_2(s)\right)\right)
				[S^{k_1,k_2}](s;2M) \,ds \\&\qquad +\int_0^t\left(k_1q_1(s)+k_2q_2(s)\right)\left(\frac{M-k_1}{2M} [S^{k_1+1,k_2}](s;2M)\right) \,ds
				\\&\qquad +\int_0^t\left(k_1q_1(s)+k_2q_2(s)\right)\left(\frac{M-k_2}{2M} [S^{k_1,k_2+1}](s;2M)\right) \,ds.
			\end{aligned}
		\end{equation}
	}
		Let us consider the limit of~\eqref{eq:[S^n](t;M)-integral form-2groups} as $a \to \infty$. 
		Since $[S^{k_1,k_2}](t;2M)$ is monotonically decreasing in~$M$, see~\eqref{eq:[S^n]-monotone-in-M_2groups}, and 
		since  $[S^{k_1,k_2}] \ge  0$ as a probability,
		this implies that $[S^{k_1,k_2}](t;2M)$ converges pointwise as $M \to \infty$ 
		to some limit~$[S^{k_1,k_2}_\infty](t)$.
		Therefore, as $M \to \infty$,
		the left-hand side of~\eqref{eq:[S^n](t;M)-integral form-2groups} converges to 
		$[S^{k_1,k_2}_\infty]-1$. 
		In addition, since $[S^{k_1,k_2}]$ is a probability, $0 \le [S^{k_1,k_2}](t;2M) \le 1$, and so, 
		by the dominated convergence theorem, the integrals of~$[S^{k_1,k_2}], [S^{k_1+1,k_2}]$ and~$[S^{k_1,k_2+1}]$ on the 
		right-hand side of~\eqref{eq:[S^n](t;M)-integral form-2groups}
		converge to the integrals of the limits. 
		Hence, the limit of~\eqref{eq:[S^n](t;M)-integral form-2groups} as $M \to \infty$ is
		\begin{equation}
			\label{eq:[S^n_infty](t)-integral form-2groups}
			\begin{aligned}
				&[S^{k_1,k_2}_{\infty}](t;M)-(1-I_1^0)^{k_1}(1-I_2^0)^{k_2} =\\ &\qquad -\int_0^t\left(k_1p_1(s)+k_2p_2(s)+\left(k_1q_1(s)+k_2q_2(s)\right)\right)
				[S^{k_1,k_2}_\infty](s) \,ds \\&\qquad +\frac{1}{2}\int_0^t\left(k_1q_1(s)+k_2q_2(s)\right) [S^{k_1+1,k_2}_{\infty}](s) \,ds+\frac{1}{2}\int_0^t\left(k_1q_1(s)+k_2q_2(s)\right) [S^{k_1,k_2+1}_{\infty}](s) \,ds.
			\end{aligned}
		\end{equation}
		
		Since a pointwise limit of a sequence of 
		measurable functions is also measurable,  $[S^{k_1,k_2}_\infty](t)$ is  measurable. 
		Hence, it follows from~\eqref{eq:[S^n_infty](t)-integral form-2groups} that it is 
		continuous, hence differentiable. Differentiating~\eqref{eq:[S^n_infty](t)-integral form-2groups},
		we conclude that  $[S^{k_1,k_2}_\infty]$ satisfies the limit ODE~\eqref{eq:master_inf_2groups}.
	\end{proof}

		\bibliographystyle{plain}
		
	\end{document}